\pdfoutput=1                      

\documentclass[10pt,a4paper]{article}

\usepackage[accsupp]{axessibility}    

\usepackage{latexsym,amsfonts,amsmath,amsthm,amssymb, graphicx, mathrsfs, fancyhdr,
amscd,
fontenc,verbatim,stix,amsfonts,amsthm,indentfirst}

\numberwithin{equation}{section} 
\usepackage[all]{xy}
\usepackage{braket}

\usepackage{color,xcolor}

\usepackage[draft=false,setpagesize=false,pdfstartview=FitH,
colorlinks=true,citecolor=blue,pagebackref=true]{hyperref}

\theoremstyle{definition}
\newtheorem{definition}{Definition}[section]
\newtheorem{example}[definition]{Example}

\theoremstyle{plain}
\newtheorem{proposition}[definition]{Proposition}
\newtheorem{lemma}[definition]{Lemma}
\newtheorem{theorem}[definition]{Theorem}
\newtheorem{corollary}[definition]{Corollary}

\newtheorem*{conjecture*}{Conjecture}

\theoremstyle{definition}
\newtheorem{remark}[definition]{Remark}

\newcommand{\diver}{\mathrm{div}}

\newcommand{\cut}{\mathrm{cut}}
\newcommand{\ind}{\mathrm{ind}}

\newcommand{\eps}{\varepsilon}

\newcommand{\di}{\mathrm{d}}

\newcommand{\R}{\mathbb R}

\newcommand{\lip}{\mathrm{Lip}}
\newcommand{\loc}{\mathrm{loc}}
\newcommand{\disp}{\displaystyle}

\newcommand{\gr}{\mathcal{G}}

\newcommand{\Ric}{\mathrm{Ric}}
\newcommand{\Sec}{\mathrm{Sec}}

\newcommand{\LL}{\mathscr{L}}

\newcommand{\ha}{h}
\newcommand{\UU}{U}

\makeatletter      
\newcommand*\owedge{\mathpalette\@owedge\relax}
\newcommand*\@owedge[1]{
	\mathbin{
		\ooalign{
			$#1\m@th\bigcirc$\cr
			\hidewidth$#1\m@th\wedge$\hidewidth\cr
		}
	}
}
\makeatother

\allowdisplaybreaks[1]

\makeatletter
\def\blfootnote{\gdef\@thefnmark{}\@footnotetext}
\makeatother

\begin{document}

\author{Xavier Cabr\'e\textsuperscript{1,2,3}, Giovanni Catino\textsuperscript{4}, Luciano Mari\textsuperscript{5}, \\ Paolo Mastrolia\textsuperscript{5}, Alberto Roncoroni\textsuperscript{4}}

\title{\textbf{Gradient estimates for the Green kernel under spectral Ricci bounds, and the stable Bernstein theorem in $\R^4$}}

\date{}
\maketitle

\blfootnote{\emph{Grants and funding:} X.C. is supported by the Spanish grants PID2021-123903NB-I00 and RED2022-134784-T funded  through MCIN/AEI/10.13039/501100011033 and ERDF “A way of making Europe”, by the Catalan grant 2021-SGR-00087, and by the Spanish State Research Agency through the Severo Ochoa and Mar\'{\i}a de Maeztu Program for Centers and Units of Excellence in R\&D (CEX2020-001084-M). G.C., P.M. and A.R. are members of GNSAGA (Gruppo Nazionale per le Strutture Algebriche, Geometriche e loro Applicazioni). G.C., L.M., P.M. and A.R. are partially supported by the 2022 PRIN project no. 20225J97H5 (Italy) ``Differential-geometric aspects of manifolds via Global Analysis''.}

\begin{center}
	\textit{To the memory of Zindine Djadli}
\end{center}

\smallskip

\newcommand{\Addresses}{{
  \bigskip
  \footnotesize
  
  \textsuperscript{1} Xavier Cabr\'e, \textsc{ICREA, Pg. Lluis Companys 23, 08010 Barcelona, Spain.}\par\nopagebreak
  
  \textsuperscript{2} Xavier Cabr\'e, \textsc{Universitat Polit\`ecnica de Catalunya, Departament de Matem\`{a}tiques and IMTech, Av. Diagonal 647, 08028 Barcelona, Spain.}\par\nopagebreak
  
  \textsuperscript{3} Xavier Cabr\'e, \textsc{Centre de Recerca Matem\`atica, Edifici C, Campus Bellaterra, 08193 Bellaterra, Spain.}\par\nopagebreak
  \textit{E-mail address}: \texttt{xavier.cabre@upc.edu}

  \bigskip

  \textsuperscript{4} G.~Catino \& A.~Roncoroni, \textsc{Dipartimento di Matematica, Politecnico di Milano, Piazza Leonardo da Vinci 32, 20133, Milano, Italy.}\par\nopagebreak
  \textit{E-mail addresses}: \texttt{giovanni.catino@polimit.it}, \texttt{alberto.roncoroni@polimit.it}

  \bigskip

  \textsuperscript{5} L.~Mari \& P.~Mastrolia, \textsc{Dipartimento di Matematica, Universit\`a degli Studi di Milano, Via Cesare Saldini 50, 20133 Milano, Italy}\par\nopagebreak
  \textit{E-mail addresses}: \texttt{luciano.mari@unimi.it}, \texttt{paolo.mastrolia@unimi.it}

}}

\normalsize

\begin{abstract}
We describe a method to prove new integral inequalities for stable minimal hypersurfaces in Euclidean space. As an application, we give a simple proof that complete, two sided, stable minimal hypersurfaces in $\R^4$ are hyperplanes. A core part of the argument hinges on the fact that stable minimal hypersurfaces in non-negatively curved spaces are examples of manifolds with a spectral Ricci curvature lower bound; in particular, we prove a sharp pointwise gradient estimate for the Green kernel on non-parabolic manifolds with spectral Ricci lower bounds, extending previous work by Colding.
%

\end{abstract}

\

\noindent
\textbf{MSC2020}: Primary: 53A10, 53C24; Secondary: 53C42, 53C21.

\noindent
\textbf{Keywords:} stable minimal hypersurfaces, criticality, Green kernel, spectral Ricci bounds.



\section{Introduction}

Over the past four years, one of the main open questions in minimal hypersurface geometry, the stable Bernstein problem about $2$-sided, complete stable minimally immersed hypersurfaces $M^n \to \R^{n+1}$ (Problem 102 in \cite{yau_problem}), found a definitive answer except for dimension $n=6$:

\begin{theorem}[\textbf{Stable Bernstein Theorem}, \cite{docarmopeng1979,fischer_schoen,pogor,CL,cl_2,CMR,clms,mazet}]\label{teo_stableber}
	Let $M^n \to \R^{n+1}$ be a connected, $2$-sided, complete stable minimally immersed hypersurface. If $n \le 5$, then $M$ is a hyperplane.
\end{theorem}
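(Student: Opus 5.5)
Theorem~\ref{teo_stableber} collects known results, so I would organize the proof by dimension and reduce each case to curvature estimates plus a growth bound. The common first step is to show that $M$ has Euclidean volume growth, $|B_r(x)| \le C r^n$ for intrinsic balls. Once this holds, Schoen--Simon--Yau curvature estimates (valid for $n \le 5$ under stability and volume growth) give $|A| \equiv 0$, so $M$ is a hyperplane. The case $n=2$ needs no growth bound: stability makes the operator $-\Delta - |A|^2 \ge 0$, so $-\Delta + K \ge 0$ with $K = -|A|^2/2$. By Fischer-Colbrie--Schoen, the universal cover is then conformally parabolic, and a logarithmic cutoff in the stability inequality gives $A \equiv 0$. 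For $n=3$ I would invoke Chodosh--Li, and for $n=4,5$ the works \cite{CL,cl_2,CMR,mazet}. Still, I would like a self-contained route in $n=3,4$ that follows the strategy announced in the abstract.

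For that route, the key observation is that stability in $\R^{n+1}$ yields a spectral Ricci lower bound. Gauss's equation gives $\Ric \ge -\frac{n-1}{n}|A|^2$, and stability says $-\Delta - |A|^2 \ge 0$. Combining these, $-\gamma\Delta + \lambda_{\Ric} \ge 0$ in the spectral sense for a suitable $\gamma$ depending on $n$, where $\lambda_{\Ric}$ denotes the smallest eigenvalue of the Ricci tensor. I would split into two cases. If $M$ is parabolic, stability with a logarithmic cutoff forces $|A| \equiv 0$ in low dimension. If $M$ is non-parabolic, it has a minimal positive Green kernel $G$, and I would set $b = G^{1/(2-n)}$, normalized to mimic distance.

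The core analytic step is a Colding-type monotonicity. Working on level sets of $b$, I would use the Bochner formula for $|\nabla b|$, weighted by a power of $b$ and an auxiliary positive solution $w$ of $\gamma\Delta w \le \lambda_{\Ric} w$. From this I want the sharp gradient bound $|\nabla b| \le C$ (equivalently $|\nabla G| \le C\, G^{(n-1)/(n-2)}$), together with monotone quantities
\[
  \int_{\{b = t\}} |\nabla b|^{\beta} w^{\alpha} \, t^{-\delta}.
\]
Plugging test functions $\varphi(b)$ into the stability inequality, and using the coarea formula on the level sets of $b$, should then produce an integral inequality whose boundary terms stay bounded by the gradient estimate. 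In $n=4$ this inequality should force $\int |A|^2 \varphi(b)^2 = o(1)$ as the cutoff scale goes to infinity, so $A \equiv 0$.

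The main obstacle is the sharp gradient estimate under the merely spectral Ricci bound. Colding's argument uses pointwise $\Ric \ge 0$, whereas here the negative part must be absorbed through the weight $w$. The exponents $(\alpha,\beta,\gamma)$ must satisfy an algebraic compatibility condition, which is where the restriction $n \le 4$ enters. My first attempt would be to run Bochner with refined Kato $|\nabla^2 b|^2 \ge \frac{n}{n-1}|\nabla|\nabla b||^2$ on $\{ |\nabla b| \ne 0 \}$, then optimize the exponents by completing the square.
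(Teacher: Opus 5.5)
Citing the literature dimension by dimension is acceptable for this compiled theorem, and the paper itself reproves only $n=3$. Your attributions are off, though: $n=4$ is \cite{clms} and $n=5$ is \cite{mazet}, while \cite{CL,cl_2,CMR} all concern $n=3$.

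Your self-contained route has a genuine gap: it never uses Simons' equation \eqref{EqSimons}, and without it the conclusion cannot be reached. Take $b=\gr^{\frac{1}{2-n}}$. The coarea formula and \eqref{int_nablag} give exactly $\int_M|\nabla(\varphi\circ b)|^2=\frac{1}{n-2}\int_0^\infty\varphi'(t)^2t^{n-1}\,dt$. By Cauchy--Schwarz this is at least $R^{n-2}$ whenever $\varphi(R)=1$ and $\varphi$ has compact support. This is just non-parabolicity, it holds whatever gradient bound you prove, and it rules out obtaining $\int_M|A|^2\varphi(b)^2=o(1)$ from the stability inequality. More generally, flat $\R^n$ satisfies stability, $\Ric\ge-\frac{n-1}{n}|A|^2$ and Bochner's formula with $|A|^2$ replaced by $V=\frac{(n-2)^2}{4}(1+|x|^2)^{-1}\not\equiv0$ (by Hardy's inequality). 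So no argument built only on these facts can force $|A|\equiv0$.

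What is missing is an inequality for a higher power of $|A|$. The paper obtains it from \eqref{EqSimons} and the refined Kato inequality, which give $\Delta|A|^{\frac{n-2}{n}}+\frac{n-2}{n}|A|^2|A|^{\frac{n-2}{n}}\ge0$. Lemma \ref{prop_magic}(i), applied with $u^t$, where $u>0$ solves $\Delta u+|A|^2u\le0$, then shows that $z=|A|^{\frac{n-2}{n}(1+\delta)}u^{-t\delta}$ satisfies $\Delta z+|A|^2z\ge\big(\big[t-\frac{n-2}{n}\big]\delta+\frac2n\big)|A|^2z$. Testing stability with $z\psi$ yields the weighted Schoen--Simon--Yau inequality \eqref{EstBer}, whose right-hand side involves $u^{-2t\delta}|\nabla\varphi|^{p+1}$ with $p=1+2\frac{n-2}{n}(1+\delta)$.

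Your target gradient estimate is also misplaced. A spectral Ricci bound does not give Colding's bound $|\nabla b|\le C$. The paper proves instead $|\nabla\gr|^{\frac{n-2}{n-1}}\le Cu^{\frac{n-2}{n}}\gr^{\frac2n}$ outside a compact set. It does not use level-set monotonicity: it shows via Lemma \ref{prop_magic}(i) that $|\nabla\gr|^{\frac{n(n-2)}{2(n-1)}}u^{-\frac{n-2}{2}}$ is subharmonic and compares it with a multiple of $\gr$. This uses $|A|\in L^\infty$ after point-picking, Cheng--Yau, and $\gr\to0$ at infinity. In your notation the estimate reads $|\nabla b|\le C(u/\gr)^{\frac{n-1}{n}}$, where $u/\gr$ is bounded below but not above. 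The leftover power of $u$ is cancelled only by the weight in \eqref{EstBer}, through the choice $t\delta=\frac{(n-1)(n-2)}{n^2}(1+\delta)$. That is why the Simons-based subsolution must carry a negative power of $u$.

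Finally, the dimension restriction does not come from the gradient estimate. That estimate holds for all $n\ge3$ with $\tau=\frac{n-2}{n}$ (Corollary \ref{cor_gradesti_minimal}). The restriction comes from the first condition in \eqref{eq_tt}, which is needed so that the coarea bound becomes $\int_0^1\eta'(s)^{p+1}s^p\,ds$ and a logarithmic cutoff applies. That condition has a solution $\delta\ge0$ only for $n=3$; for $n=4$ it reads $\frac34(1+\delta)\ge2+\delta$, which is impossible. So this method covers hypersurfaces in $\R^4$, i.e.\ $n=3$. You appear to have read ``$\R^4$'' as $n=4$, and nothing supports your claim that the exponents are compatible for $n\le4$.
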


Recall that a hypersurface $M^n \to \R^{n+1}$ is said to be $2$-sided if there exists a globally defined normal vector field along $M$, and that a $2$-sided minimal hypersurface $M^n \to \R^{n+1}$ is stable if its second variation operator $-\Delta - |A|^2$ is non-negative in the spectral sense, i.e. if
\begin{equation}\label{basicstab}
 \int_M |A|^2 \psi^2 \leq \int_M|\nabla \psi|^2  \qquad \forall \, \psi \in \lip_c(M)\,,
\end{equation}
where $\lip_c(M)$ denotes the set of Lipschitz functions with compact support in $M$ and $A$ is the second fundamental form of $M$.

Theorem \ref{teo_stableber} has a long history. When $M$ is the boundary of a set which is locally minimizing for the perimeter, the conclusion was proved for each $n\leq 6$ as a consequence of the classification of minimizing cones in $\R^{n+1}$ due to Fleming \cite{Fle}, De Giorgi \cite{DeG}, Almgren \cite{Alm} and Simons \cite{Sim}, in connection to the solution of the graphical Bernstein problem; see \cite{Giusti_book, maggi} for a thorough account. Counterexamples to the validity of Theorem \ref{teo_stableber} when $n\geq 7$ were constructed by Hardt \& Simon \cite{simhar} and, for $M$ an entire graph,  when  $n\geq 8$ by Bombieri, De Giorgi \& Giusti \cite{BDGG}. These counterexamples are built upon {\em Simons' cone}
\[
C^7 = \big\{ (x,y) \in \R^4 \times \R^4 : |x|=|y| \big\} \subset \R^8,
\]
a singular minimal hypersurface which is shown by Simons \cite{Sim} to be stable, and by Bombieri-de Giorgi and Giusti \cite{BDGG} to be perimeter minimizer (see also \cite{dephi_pao} for a simplified proof). 

Compared to the case of minimizing boundaries, when $M$ is just a minimally immersed stable hypersurface new difficulties arise and require further, different, tools. In dimension $n=2$, Theorem \ref{teo_stableber} was settled in 1979-81 through the independent works of Do Carmo \& Peng \cite{docarmopeng1979}, Fischer-Colbrie \& Schoen \cite{fischer_schoen} and Pogorelov \cite{pogor}. In the same years, Schoen, Simon \& Yau \cite{SSY} established the result in dimensions $n \in \{3,4,5\}$ under the additional condition that $M$ has Euclidean volume growth, that is, geodesic balls $B_r$ of $M^n$ centered at a fixed origin satisfy
\begin{equation}\label{eq_intrivolume}
|B_r| \le C_nr^n \qquad \text{for some constant $C_n>0$.}
\end{equation}
The core of the argument in \cite{SSY} (see also \cite{CL} for $n=3$) is the derivation, in dimension $n \le 5$, of the inequality
\begin{equation}\label{eq_SSY}
\int_M |A|^p |\psi|^p \le C_{n,p} \int_M |\nabla \psi|^p \qquad \forall \, \psi \in \lip_c(M),
\end{equation}
for a positive constant $C_{n,p}$ and some $p \ge n$. Assuming \eqref{eq_intrivolume} one may then apply a standard cutoff argument (if $p>n$) or a logarithmic cutoff one (if $p=n$) to conclude that $|A| \equiv 0$ (we refer e.g. to \cite[Theorems 2.21 and 2.22]{colding_minicozzi_book} for further details). Inequality \eqref{eq_SSY} relies on \emph{Simons' equation}, which in ambient Euclidean space reads as
\begin{equation}\label{EqSimons}
	\frac{1}{2}\Delta|A|^2 = |\nabla A|^2 - |A|^4,
\end{equation}
see e.g. \cite[Formula (2.16)]{colding_minicozzi_book}.

For $n\geq 3$, the validity of Theorem \ref{teo_stableber} remained open up until 2021, when Chodosh \& Li \cite{CL} obtained the first complete proof in dimension $n=3$. Subsequently, Chodosh \& Li \cite{cl_2} and Catino, Mastrolia \& Roncoroni \cite{CMR} reproved the theorem using new techniques, each relying on some features specific to $3$-dimensional hypersurfaces (the use of Gauss-Bonnet theorem for level sets in \cite{CL,cl_2}, and the completeness of a conformally deformed metric in \cite{CMR}). In \cite{clms}, Chodosh, Li, Minter \& Stryker settled the case $n=4$ by refining the method in \cite{cl_2} and introducing new ideas to bypass the use of Gauss-Bonnet theorem. Among their contributions is a spectral Bishop-Gromov volume comparison theorem, later extended to any dimension by work of Antonelli \& Xu \cite{antonelli_xu}. Finally, Mazet \cite{mazet} sharpened the techniques in \cite{clms} to solve the problem in dimension $n=5$. 

The case $n=6$ remains open in full generality. Under the extrinsic volume bound 
\begin{equation}\label{eq_extrinsic}
	\vert M \cap \mathbb{B}^{\R^{7}}_r(0)\vert \le C r^6,
\end{equation}
a stronger assumption than \eqref{eq_intrivolume} which in particular forces $M$ to be proper in $\R^7$ (see, for instance, \cite[Appendix]{bra_fab_mar_vie}), Schoen \& Simon \cite{ss} proved that $M$ must be a hyperplane when $M$ is embedded, and more recently Bellettini \cite{bel} extended this to immersed hypersurfaces. We also refer the reader to the work of Tysk \cite{t}. 

In light of Schoen-Simon-Yau's result, a proof of Theorem \ref{teo_stableber} in dimensions $n \le 5$ follows once \eqref{eq_intrivolume} is established, which is indeed the striking achievement of  \cite{cl_2,clms,mazet}. 
A major obstacle in dimension $n=6$ is that certain key algebraic inequalities used in the proof of Theorem \ref{teo_stableber} break down. Indeed, as highlighted in a note by Antonelli \& Xu \cite{ax_2}, the estimates in \cite{cl_2,clms,mazet} to get \eqref{eq_intrivolume} cannot be sharpened to cover the case $n=6$ without introducing genuinely new ideas. Furthermore, even establishing that $|B_r| \le C r^6$ would not suffice to conclude Theorem \ref{teo_stableber} unless inequality \eqref{eq_SSY} is extended as well to $n=6$ and some $p \ge 6$, a step that likewise appears to require new ingredients. In this direction, a noteworthy aspect of \cite{bel} is that the restriction $n \le 6$ emerges naturally in certain estimates connecting Simons' equation with the $L^2$-Sobolev inequality (cf. \cite[Theorem 3]{bel}).

The purpose of this paper is to introduce new tools that could be useful to address the Bernstein problem for $M^6 \to \R^7$; as an application, in Section \ref{sec_bernstein} we provide a streamlined proof of Theorem \ref{teo_stableber} for $n=3$. All ingredients used in the proof are purely analytical, with the exception, essentially,   of Simons' equation. 

Our general strategy is to incorporate weights into the  Schoen-Simon-Yau inequality in order to relax the algebraic constraints  relating $p$ and $n$. To this end, one may expect that helpful weight functions should have controlled Laplacian. To take advantage of this, the integration-by-parts argument leading to \eqref{eq_SSY} should be refined to include second order terms both in $|A|$ (to exploit Simons' equation) and in the weight. Effectively combining the resulting terms, however, is nontrivial. In earlier work \cite{CMR}, some of us obtained a version of \eqref{eq_SSY} weighted by powers of a positive function satisfying
\begin{equation}\label{stab_u}
\Delta u + |A|^2 u \le 0 \quad \text{in }  M\, ,
\end{equation}
whose existence follows from stability. However, the proof relied on a rather intricate use of Simons' equation and is limited to dimension $n=3$. 

Here we present a different method, valid in all dimensions, conceptually simpler and amenable to iteration by introducing additional weights. For minimal hypersurfaces in Euclidean space and with $u$ as in \eqref{stab_u}, the method yields (see equation \eqref{EstBer})
\[
\int_M u^{-2t\delta} |A|^p |\psi|^p \le C \int_M u^{-2t\delta} |\nabla \psi|^p \qquad \forall \, \psi \in \lip_c(M) 
\]
for a positive constant $C$ and suitable parameters $t,\delta$ and $p$ depending on $n$. To get $|A| \equiv 0$ one may then seek for a family of cut-off functions $\{\psi_j\}$ converging to a positive limit and for which the right-hand side tends to zero as $j \to \infty$. We construct them in the form  $\psi_j = \eta_j(\gr)$, where $\gr$ is the Green kernel of $-\Delta$ centered at a fixed origin. To proceed and conclude, we thus need to estimate $|\nabla \gr|$ either pointwise or in integral form, as explained next.

The second main result of the present paper is a sharp pointwise estimate for $|\nabla \gr|$. To state it, let us observe that a minimal hypersurface $(M^n,g)$ in Euclidean space, and more generally in an ambient manifold $(N^{n+1}, \bar g)$ whose sectional curvature satisfies $\overline{\Sec} \ge 0$, enjoys the bound 
\begin{equation}\label{eq_lowerric_intro}
\Ric \ge - \frac{n-1}{n}|A|^2 g.
\end{equation}
If $M$ is stable, i.e. if the Jacobi operator $-\Delta - \big[ |A|^2 + \overline{\Ric}(\nu,\nu)\big]$ is non-negative in the spectral sense, then $M$ is an example of manifold whose Ricci tensor satisfies a \emph{spectral Ricci bound}
\[
\Ric \ge - \frac{n-1}{n-2} \tau V g, \qquad -\Delta - V \ge 0.
\] 
for some $\tau \in [0,\infty)$ and $V \in C(M)$. Here, $-\Delta - V \ge 0$ is meant in the spectral sense, i.e. the quadratic form 
\[
Q_V(\psi) \doteq \int_M |\nabla \psi|^2 - V \psi^2
\] 
is non-negative for each $\psi \in \lip_c(M)$. The structure of such manifolds was studied in depth by Li \& Wang \cite{liwang_ens}; see also Cheng \& Zhou \cite{cheng_zhou_1}, Pigola, Rigoli \& Setti \cite{prs_book} and, in more recent years, Antonelli, Pozzetta \& Xu \cite{apx, anto_xu_gluing}, Bour \& Carron \cite{boucar}, Carron \& Rose \cite{carrose}, and \cite{cmmr_criticality} by some of the authors of the present paper. The results in these works show that various geometric and topological properties of a complete manifold can be controlled under spectral Ricci bounds in the range
\[
\tau \in [0,1], 
\]
However, when a positive Green kernel of $-\Delta$ exists (i.e., when $M$ is non-parabolic), its behaviour under spectral Ricci bounds is still poorly understood. To state our result, recall that an operator $-\Delta - V$ is said to be
\emph{subcritical} if $-\Delta - V \ge 0$ and if $-\Delta - V$ admits a positive Green kernel. Moreover, $-\Delta - V$ is said to have \emph{finite index} if its Morse index
\[
\ind_V(M) = \sup \left\{ \dim Z \ : \ Z \, \text{subspace of }\,  C^\infty_c(M), \ \ Q_V(\psi) < 0 \ \ \text{ for each } \, \psi \in Z \backslash \{0\} \right\}
\]
is finite. By works of Fischer-Colbrie \cite{fischercolbrie} and Devyver \cite{devyver} (see also \cite[Thm 2.41]{bmr_mem} for an alternative proof), $\ind_V(M)<\infty$ is equivalent to the existence of a compact set $K$ such that $-\Delta - V \ge 0$ in $M \backslash K$. 

The following is our result. Its statements are contained in Theorems \ref{teo_gradesti} and \ref{teo_gradesti_kernel}.

\begin{theorem}\label{teo_gradesti_intro}
Let $(M^n,g)$ be a complete Riemannian manifold of dimension $n \ge 3$ satisfying
\[
\Ric \ge - \frac{n-1}{n-2} \tau V g, 
\]
for some $0 \le V \in C^{0,\alpha}_\loc(M)$ and 
\[
\tau \in \left[0, \frac{3}{4}\right) \ \ \text{ if } \, n=3, \qquad \tau \in [0,1) \ \ \text{ if } \, n \ge 4.
\]	
Assume that $M$ is non-parabolic, and let $\gr$ be the minimal positive Green kernel of $-\Delta$ on $M$ with pole at $o$. 
\begin{itemize}
	\item[(i)] If $-\Delta - V$ has \emph{finite index}, then for each smooth open set $\UU \Subset M$\footnote{Here, and in what follows, $\UU \Subset M$ means that $\UU$ is compactly contained in $M$, i.e., its closure $\overline{\UU}$ is compact and $\UU \subset M$.} for which there exists a solution $0 < u \in C^2(M \backslash \UU)$ to $\Delta u + Vu \le 0,$ it holds
\[
|\nabla \gr|^{\frac{n-2}{n-1}} \le C u^{\tau} \gr^{1-\tau}\qquad \text{in } \, M \backslash \UU, \qquad \text{where } \, C = \max_{\partial \UU} \left\{ |\nabla \gr|^{\frac{n-2}{n-1}} u^{-\tau} \gr^{\tau-1} \right\}.
\]
\item[(ii)] If $-\Delta - V \ge 0$ is subcritical, letting $u$ be a positive Green kernel for $-\Delta-V$ with pole~$o$, it holds
\begin{equation}\label{eq_gradesti_kernel_intro}
	|\nabla \gr|^{\frac{n-2}{n-1}} \le (n-2) |\mathbb{S}^{n-1}|^{\frac{1}{n-1}} u^{\tau} \gr^{1-\tau} \qquad \text{in } \, M \backslash \{o\}.
\end{equation}
Moreover, equality holds for some $x_0 \in M \backslash \{o\}$ if and only if $V \equiv 0$ and $M$ is isometric to $\R^n$.
\end{itemize}
\end{theorem}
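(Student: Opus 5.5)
The plan is a maximum principle argument for a suitable function combining $|\nabla\gr|$, $\gr$ and $u$, in the spirit of Colding's monotonicity for $b=\gr^{1/(2-n)}$. Set $w \doteq |\nabla \gr|^{\frac{n-2}{n-1}}$. The Bochner formula for the harmonic function $\gr$, together with the refined Kato inequality $|\nabla^2\gr|^2 \ge \frac{n}{n-1}|\nabla|\nabla\gr||^2$ and $\Ric \ge -\frac{n-1}{n-2}\tau V g$, gives on $\{\nabla\gr\neq 0\}$ (in the barrier/distributional sense elsewhere)
\[
\Delta |\nabla \gr|^{\frac{n-2}{n-1}} \ge -\tau V |\nabla\gr|^{\frac{n-2}{n-1}},
\]
since the exponent $\frac{n-2}{n-1}$ is exactly the one that absorbs the gradient term via the Kato constant. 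Thus $w$ satisfies $\Delta w + \tau V w \ge 0$. Next I compare $w$ with $\phi\doteq u^\tau \gr^{1-\tau}$. Using $\Delta u\le -Vu$, $\Delta\gr=0$, one computes
\[
\frac{\Delta\phi}{\phi} = \tau\frac{\Delta u}{u} - \tau(1-\tau)\Big|\frac{\nabla u}{u}-\frac{\nabla \gr}{\gr}\Big|^2 \le -\tau V - \tau(1-\tau)\Big|\nabla\log\frac{u}{\gr}\Big|^2,
\]
so $\phi$ is a supersolution of $\Delta+\tau V$ (for $\tau\le1$). Writing $f = w/\phi$, the equation for $f$ becomes $\Delta f + 2\langle\nabla\log\phi,\nabla f\rangle \ge f\,(\text{nonnegative term})$, i.e. $f$ is a subsolution of a drift operator with no zero-order term, so $f$ obeys the maximum principle on bounded domains.

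For (i), I apply this on $M\setminus \UU$ and need the maximum of $f$ to be attained on $\partial\UU$, i.e. control of $f$ at infinity. This is the main obstacle: one needs $\limsup f \le C$ along an exhaustion. My plan is to run the argument not for $f$ directly but for $f_\varepsilon = w/(\phi + \varepsilon \cdot\text{something growing})$, or more concretely to use an integral argument: test the inequality $\mathrm{div}(\phi^2\nabla f)\ge 0$ against $(f-C)_+^{q}\eta^2$ with a cutoff $\eta$, and show the boundary terms vanish using $\int|\nabla \gr|^2<\infty$ off a neighbourhood of the pole, finiteness of the index (to get $u$ with controlled behaviour), and the extra good term $\tau(1-\tau)|\nabla\log(u/\gr)|^2$; the restriction $\tau<3/4$ when $n=3$ should appear precisely when choosing the exponent $q$ so that the Kato gain and this term dominate the cutoff error. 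Concluding $(f-C)_+\equiv 0$ gives the estimate with $C=\max_{\partial\UU} f$.

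For (ii), I apply (i) with $\UU = B_\varepsilon(o)$ and $u$ the Green kernel of $-\Delta-V$, then let $\varepsilon\to0$. Near the pole both $\gr$ and $u$ behave like $\frac{r^{2-n}}{(n-2)|\mathbb S^{n-1}|}$ and $|\nabla\gr|\sim \frac{r^{1-n}}{|\mathbb S^{n-1}|}$ (using $V\in C^{0,\alpha}$ for the local asymptotics of $u$), so $f\to \big(|\mathbb S^{n-1}|^{-1}\big)^{\frac{n-2}{n-1}}\cdot (n-2)|\mathbb S^{n-1}|\cdot |\mathbb S^{n-1}|^{-1}\cdot\ldots$, which simplifies to $(n-2)|\mathbb{S}^{n-1}|^{\frac1{n-1}}$ on $\partial B_\varepsilon$ as $\varepsilon\to0$. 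For rigidity, an interior maximum of $f$ forces $f$ constant by the strong maximum principle, hence equality in Kato and Bochner: $\nabla^2 \gr$ has the model form, $\Ric(\nabla\gr,\nabla\gr)=-\frac{n-1}{n-2}\tau V|\nabla \gr|^2$, $\nabla\log(u/\gr)=0$ and $\Delta u=-Vu$. Then $u=c\gr$ gives $Vu=0$, so $V\equiv0$, and $b=\gr^{1/(2-n)}$ satisfies $\nabla^2 b^2 = 2g$ with $|\nabla b|=1$, yielding $M\cong\R^n$ by the standard cone/Euclidean rigidity argument.
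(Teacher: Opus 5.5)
The set-up is sound. $w=|\nabla\gr|^{\frac{n-2}{n-1}}$ is a subsolution of $\Delta+\tau V$ and $\phi=u^\tau\gr^{1-\tau}$ is a supersolution, so $f=w/\phi$ satisfies $\LL_\phi f\ge \tau(1-\tau)|\nabla\log(u/\gr)|^2 f$. Part (ii), obtained by applying (i) on $M\setminus B_\eps(o)$ and using the pole asymptotics, is exactly the paper's argument.

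\textbf{The gap is in (i).} It sits at the step you yourself call the main obstacle: control of $f$ at infinity is never obtained, and neither suggestion provides it.

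A barrier $w/(\phi+\eps\Phi)$ would need pointwise control of $w$ at infinity. That is unavailable: $V$ is not assumed bounded, so Cheng--Yau does not apply.

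Testing $\mathrm{div}(\phi^2\nabla f)\ge0$ against $(f-C)_+^q\eta^2$ with distance cut-offs is Yau's $L^p$-Liouville argument for $\LL_\phi$. It closes only if $(f-C)_+\in L^{q+1}(\phi^2\,dV)$. The good term cannot absorb the cut-off error $\int\phi^2(f-C)_+^{q+1}|\nabla\eta|^2$; it vanishes identically when $V=0$ and $u=\gr$.

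What is actually available is:
\begin{itemize}
\item[$\cdot$] $\int_{M\setminus\UU}\gr^{-a}|\nabla\gr|^2<\infty$ for every $a<1$ (Lemma \ref{lem_inte_h}, from minimality of $\gr$); your $\int|\nabla\gr|^2<\infty$ is only the case $a=0$;
\item[$\cdot$] $\gr\le C_\UU u$, again from minimality, since $u$ is superharmonic when $V\ge0$. This, not the finite index, is the control on $u$ that is used.
\end{itemize}
Matching powers of $|\nabla\gr|$ forces $q+1=\frac{2(n-1)}{n-2}$, and then
\[
\int_{M\setminus\UU} f^{q+1}\phi^2=\int_{M\setminus\UU}|\nabla\gr|^2\phi^{-\frac{2}{n-2}}\le C\int_{M\setminus\UU}|\nabla\gr|^2\gr^{-\frac{2}{n-2}} .
\]
Here $\frac{2}{n-2}\ge1$ when $n\le4$, and the left-hand side is infinite on $\R^3$ and $\R^4$ (take $V=0$, $u=\gr$). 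So for $n\in\{3,4\}$ the argument does not close. Also, the threshold $\frac34$ does not come from choosing $q$.

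\textbf{Two ideas are missing.}

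First, shift the harmonic factor. By Lemma \ref{prop_magic}(ii), $u_\theta\doteq(\gr+\eps)^{1-\theta}u^\theta$ still satisfies $\Delta u_\theta+\theta V u_\theta\le 0$, and negative powers of $\gr+\eps$ are now bounded. With $\theta=\tau$ this is your argument with $\phi$ replaced by $\phi_\eps\doteq u_\tau$ and $f$ by $f_\eps\doteq w/\phi_\eps$. The needed integral becomes $\int|\nabla\gr|^2\gr^{-\frac{2\tau}{n-2}}$, which is finite when $\tau<\frac{n-2}{2}$. This covers all $n\ge4$, but only $\tau<\frac12$ when $n=3$.

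Second, for $n=3$ and $\tau\in[\frac12,\frac34)$, decouple the two roles of the weight. Take $\xi=w^{\frac{n-1}{n-2}}u_\theta^{-\frac{1}{n-2}}$ with $\theta=(n-1)\tau-(n-2)\ge0$. Lemma \ref{prop_magic}(i) with $\delta=\frac1{n-2}$ gives $\Delta\xi\ge -V\xi$. Now Doob-transform by $u$ itself: $z=\xi/u$, which equals $f_\eps^{\frac{n-1}{n-2}}$, is $\LL_u$-subharmonic, and
\[
\int z^2u^2=\int|\nabla\gr|^2u_\theta^{-\frac{2}{n-2}}\le C_\eps\int_{M\setminus\UU}|\nabla\gr|^2\gr^{-\frac{2\theta}{n-2}} .
\]
The right-hand side is finite when $\frac{2\theta}{n-2}<1$, i.e.\ $\tau<\frac34$ for $n=3$. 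Yau's $L^2$ theorem for $\LL_u$ then gives $z\le a+\eps$, and one lets $\eps\to0$.

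Your good term is in fact what permits this change of weight. With $k=\frac{n-1}{n-2}$ and $X=\nabla\log\big(u/(\gr+\eps)\big)$ one finds
\[
\LL_u(f_\eps^k)\ge k f_\eps^{k-2}\big[(k-1)|\nabla f_\eps|^2+2(1-\tau)f_\eps\langle X,\nabla f_\eps\rangle+\tau(1-\tau)f_\eps^2|X|^2\big],
\]
which is non-negative exactly when $\tau\ge\frac{n-2}{n-1}$.

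\textbf{A smaller point on rigidity.} Your rigidity step, like the paper's, extracts $\nabla\log(u/\gr)=0$ from the factor $\tau(1-\tau)$, so it needs $\tau>0$. At $\tau=0$ the conclusion $V\equiv0$ actually fails: take $\R^n$ with a small compactly supported $V\ge0$.
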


\begin{remark}
Note that setting $V \equiv 0$, $\tau = 0$ and $u = \gr$ in  \textit{(ii)} we recover the sharp estimate 
\[
	|\nabla \gr|^{\frac{n-2}{n-1}} \le (n-2) |\mathbb{S}^{n-1}|^{\frac{1}{n-1}} \gr \qquad \text{in } \, M \backslash \{o\}
\]
for the Green kernel of a non-parabolic manifold with $\Ric \ge 0$, proved by Colding in \cite{colding} with a different method. From \eqref{eq_lowerric_intro}, the case of stable minimal hypersurfaces in non-negatively curved ambient spaces is recovered with the choices 
\[
\tau = \frac{n-2}{n}, \qquad V = |A|^2\, ;
\]
see Corollary \ref{cor_gradesti_minimal}.
\end{remark}

\begin{remark}
	Item {\em (i)} holds, more generally, for harmonic functions defined in $M \backslash \UU$ which have minimal growth at infinity, regardless of the non-parabolicity of the entire $M$. On the other hand, in {\em (ii)}, since $V \ge 0$ the subcriticality of $-\Delta - V$ ensure that of $-\Delta$, so $M$ is automatically non-parabolic. 
\end{remark}

Although our paper focuses on stable minimal hypersurfaces, the techniques presented in Section \ref{sec_spectral} possess general validity, and we believe that it will be possible to implement the method developed in this paper to get flatness results in low dimensions for other elliptic problems.

\vspace{0.3cm}

\noindent\textbf{Structure of the paper.} Section \ref{sec_bernstein} contains a complete proof of the Stable Bernstein Theorem in $\R^4$. The proof is self-contained with exception of a simple computation, deferred to Section \ref{sec_spectral}, which establishes our key Lemma \ref{prop_magic}.
In Section \ref{sec_spectral}, we first prove and comment on Lemma \ref{prop_magic}, which yields a method to combine sub- and supersolutions of linear differential equations into new subsolutions, and then discuss its applications to the weighted Schoen-Simon-Yau inequality (Corollary \ref{cor_magic_minimal_2}) and to pointwise gradient estimates for minimal harmonic functions -- in particular, for the Green kernel -- on manifolds with spectral Ricci bounds; see Theorems \ref{teo_gradesti} and \ref{teo_gradesti_kernel}. Finally, in the Appendix we prove a technical proposition, used on various occasions throughout the rest of the article.

\section{A new proof of the stable Bernstein theorem in $\R^4$}\label{sec_bernstein}

We present a proof of the stable Bernstein theorem, Theorem \ref{teo_stableber}, in case the dimension is $n=3$. 
Since most of the steps hold in general dimensions, we shall restrict to $n=3$ only at the very end of the argument.

\begin{proof}[Proof of Theorem \ref{teo_stableber} for $n=3$]
Let $n\geq 3$ and $M^n\rightarrow\R^{n+1}$ be a connected, $2$-sided, complete stable minimal hypersurface. By a point-picking, scaling, and limiting argument (see \cite[Lecture 3]{white} and also \cite[beginning of Section 5]{CL} or \cite[Subsection 8.1]{chodosh_LN}), we can assume without loss of generality that 
\[
|A| \in L^\infty(M). 
\]
It is known that every minimal hypersurface admits an isoperimetric inequality. This is the Allard \cite{All} and Michael-Simon \cite{micsim} isoperimetric inequality generalized by Hoffman \& Spruck \cite{HS}; see also \cite[Section 2]{CM} for a simpler quick proof. Since $n>2$, as a consequence we deduce that $M$ is non-parabolic, i.e. $M$ admits a minimal, positive Green kernel $\gr$ centered at a fixed pole $o$, i.e. a minimal positive solution to $\Delta \gr = -\delta_o$; see e.g. \cite[Theorem 8.2]{Grygorian_book} and the lecture notes \cite[Corollary 3.12]{Li}. Moreover, by \cite{davies} (see also \cite[Remark 2.4]{ni} and \cite[Theorem 2.8]{car}), $\gr$ vanishes at infinity. In particular, for $t>0$, the sets $\{\gr \ge t\}$ are compact and, by Sard's theorem, their boundaries are smooth for almost every $t>0$. By integrating $-\Delta \gr = \delta_o$ against suitable smooth approximations of the indicatrix of $\{\gr \ge t\}$, one sees that  
\begin{equation}\label{int_nablag}
	\int_{\{\gr=t\}}|\nabla\gr|=1\qquad \text{for a.e. } \, t>0 
\end{equation}
(indeed,  identity \eqref{int_nablag} holds regardless of the compactness of $\{\gr \ge t\}$, see \cite[Proposition 2.3]{mrsv}).

Choose a $C^2$ function $u > 0$ satisfying 
$$
\Delta u + |A|^2 u \le 0 \quad \text{on }  M\, ,
$$
which is granted by stability (see \cite{fischer_schoen} and also \cite[Lemma 3.10]{prs_book} or \cite[Proposition 1.39]{colding_minicozzi_book}; these results also guarantee the existence of $u>0$ solving the above with the equality sign,  
but the inequality suffices for our argument). 

Recall that the Ricci curvature of $M$ satisfies
\begin{equation}\label{eq_Ricbound}
	\Ric = - A^2 \ge - \frac{n-1}{n}|A|^2 g\, ;
\end{equation}
the equality follows from traced Gauss equation (see e.g. \cite[equation (2.5)]{colding_minicozzi_book}), while the inequality follows from the algebraic Kato inequality for traceless symmetric tensors (see for instance \cite[Lemma 2.4]{huisken}). By Bochner's formula and again the algebraic Kato inequality (see e.g.  \cite[Chapter 1 - Section 3]{SY_book}), $|\nabla \gr|^2$ thus satisfies  
\begin{equation}\label{eq_nablaG}
	\begin{array}{lcl}
	\disp \frac{1}{2}\Delta |\nabla \gr|^2 & \ge & \disp \frac{n}{n-1} |\nabla |\nabla \gr||^2 + \Ric(\nabla \gr, \nabla \gr) \\[0.4cm]
	& \ge & \disp \frac{n}{n-1} |\nabla |\nabla \gr||^2 - \frac{n-1}{n}|A|^2 |\nabla \gr|^2\, .
	\end{array}
\end{equation}

Hence, the functions
$$
w\doteq|\nabla \gr|^{\frac{n-2}{n-1}}
$$
and $u$ satisfy
\[
\left\{ \begin{array}{l}
\Delta w+\frac{n-2}{n}|A|^2w\geq 0 \\[0.2cm]
\Delta u +|A|^2u\leq 0 
\end{array}\right. \qquad \text{in } \, \{|\nabla \gr|>0\}. 	
\]
Therefore, by Lemma \ref{prop_magic} $(i)$ in Section $3$, the function
\[
\xi \doteq w^{\frac{n}{2}} u^{- \frac{n-2}{2}}=|\nabla \gr|^{\frac{n-2}{n-1} \cdot \frac{n}{2}} u^{- \frac{n-2}{2}}
\]
satisfies
\[
\Delta \xi \ge \frac{n(n-2)}{4} \xi \left| \nabla \log \left( \frac{w}{u} \right)\right|^2 \ge 0 \qquad \text{pointwise in } \, \{|\nabla \gr|> 0\}.
\] 

Next, by \eqref{eq_Ricbound} and $|A| \in L^\infty(M)$ we deduce that the Ricci curvature of $M$ is bounded below by some constant. Hence, we can use Cheng-Yau's estimate (see \cite[Theorem 6]{ChengYau}) in $M \backslash \UU$, where 
\[
\UU \doteq \{\gr>1\}\, , 
\]
using balls all of the same radius, to conclude that $|\nabla \gr| \le C_1 \gr$ in $M \backslash \UU$ for some constant $C_1>0$. Moreover, taking $C_2=\max_{\partial \UU}u^{-1}$, we have $\gr \le C_2 u$ in $M \backslash \UU$. To see this, simply note that $C_2u-\gr$ is nonnegative on $\partial \UU$, has a nonnegative $\liminf$ as $x\rightarrow\infty$, and it cannot achieve an interior (negative) minimum, since it is superharmonic in $M\backslash\UU$. It follows that  
\[
\xi(x) \le C_3 \gr(x)^{\frac{n-2}{n-1} \cdot \frac{n}{2}} u(x)^{-\frac{n-2}{2}} \le C_4 \gr(x)^{\frac{n-2}{n-1} \left[ \frac{n}{2} - \frac{n-1}{2}\right]} \to 0 \qquad \text{as } \, x \to \infty.
\]
We can therefore use the maximum principle exactly as before to the superharmonic function 
\[
C_5 \gr-\xi \qquad \text{ in } \{|\nabla \gr|> 0\}\backslash \UU\, , 
\] 
with $C_5=\max_{\partial \UU}\xi$. From
\[
\begin{array}{ll}
C_5 \gr-\xi\geq 0 & \quad \text{on } \, \partial\left(\{|\nabla \gr|> 0\}\backslash \UU\right) \ \cup \ \{|\nabla \gr|=0\}, \\[0.2cm]
C_5 \gr(x)-\xi(x)\to 0 & \quad \text{as } \, x\to\infty. 
\end{array}
\]
We deduce that $\xi \le C_5 \gr$ in $M \backslash \UU$. Rearranging, we obtained the following pointwise gradient estimate:
\begin{equation}\label{grad_est_3_sec2}
	|\nabla \gr|^{\frac{n-2}{n-1}} \le C u^{\frac{n-2}{n}} \gr^{\frac{2}{n}} \qquad \text{in } \, M \backslash \UU \, ,
\end{equation}
for some constant $C>0$.

Next, by Simons' equation \eqref{EqSimons} and the refined Kato inequality 
\[
|\nabla A|^2 \ge \frac{n+2}{n}| \nabla |A||^2
\]
due to \cite{SSY} (see also \cite[Lemma 2.1]{colding_minicozzi_book}), it holds
\begin{equation}\label{eq_inesimons}
	|A|\Delta |A| \ge \frac{2}{n}|\nabla |A||^2 - |A|^4 \qquad \text{in } \, \{ |A|>0 \}.
\end{equation}
Thus, a computation shows that, for each $t \in [0,1]$, the functions
\[
\tilde w \doteq |A|^{\frac{n-2}{n}}\qquad\text{and}\qquad \tilde u \doteq u^t
\]
satisfy
\[
\left\{ \begin{array}{l}
\Delta \tilde w+\frac{n-2}{n}|A|^2 \tilde w\geq 0 \\[0.2cm]
\Delta \tilde u +t |A|^2\tilde u\leq 0 
\end{array}\right. \qquad \text{in } \, \{|A|>0\}. 	
\]
Hence, for $\delta>0$, a further application of Lemma \ref{prop_magic} $(i)$ below guarantees that the function
\[
z \doteq |A|^{\frac{n-2}{n}(1+\delta)} u^{-t\delta}
\]
satisfies
\[
\Delta z + |A|^2 z \ge \left( \left[t - \frac{n-2}{n}\right] \delta + \frac{2}{n} \right)|A|^2 z \qquad \text{pointwise in } \, \{|A|>0\}.
\]
As a consequence, by using Proposition \ref{prop_weakly} in the Appendix, one sees that $z \in H^1_\loc(M)$ and that the previous inequality holds weakly in all of $M$.

Given $z_0 \in H^1_\loc(M)$ and $\psi \in \lip_c(M)$, choose  $\xi=z_0\psi$ in the stability condition \eqref{basicstab} (which by density holds for every test function in $H^1(M)$ with compact support). Integration by parts shows that
\[
\int_M z_0 (\Delta z_0 + |A|^2 z_0) \psi^2 \le \int_M z_0^2 |\nabla \psi|^2
\]
holds in the weak sense in $M$. In particular, plugging $z_0 = z$ we get
\[
\left( \left[t - \frac{n-2}{n}\right] \delta + \frac{2}{n} \right) \int_M \left(|A|^{1+ \frac{n-2}{n}(1+\delta)} u^{-t\delta}\right)^2 \psi^2 \le \int_M \left(|A|^{\frac{n-2}{n}(1+\delta)} u^{-t\delta}\right)^2 |\nabla \psi|^2.
\]
Thus, if 
\[
\left[ t - \frac{n-2}{n}\right] \delta + \frac{2}{n} > 0,
\]
by taking $\psi=\varphi^{1+\frac{n-2}{n}(1+\delta)}$ for some $\varphi\in \lip_c(M)$ and using Young's inequality we deduce, for some constant $C$ depending only on $n,\delta$ and $t$ whose value may vary from line to line,
\begin{align*}
\int_M &\left(|A|^{1+ \frac{n-2}{n}(1+\delta)} u^{-t\delta}\right)^2 |\varphi|^{2+ 2\frac{n-2}{n}(1+\delta)} \le C \int_M |A|^{2\frac{n-2}{n}(1+\delta)} u^{-2t\delta} \varphi^{2\frac{n-2}{n}(1+\delta)}|\nabla\varphi|^2\\
&\le \frac{1}{2} \int_M \left(|A|^{1+ \frac{n-2}{n}(1+\delta)} u^{-t\delta}\right)^2 |\varphi|^{2+ 2\frac{n-2}{n}(1+\delta)} + C \int_M u^{- 2t\delta} |\nabla \varphi|^{2+ 2\frac{n-2}{n}(1+\delta)}.
\end{align*}
Therefore
\begin{equation}\label{EstBer}
\int_M \left(|A|^{1+ \frac{n-2}{n}(1+\delta)} u^{-t\delta}\right)^2 |\varphi|^{2+ 2\frac{n-2}{n}(1+\delta)}  \leq C \int_M u^{- 2t\delta} |\nabla \varphi|^{2+ 2\frac{n-2}{n}(1+\delta)}.
\end{equation}

We now choose $\varphi=\eta(\gr)$, where $0 \le \eta\in C^1_c((0,1])$ is nondecreasing and $\eta\equiv 1$ in $[1,\infty)$. Note that $\varphi=\eta(\gr)$ will have compact support and that will be identically $1$ is a neighborhood of the pole $o$ of $\gr$; hence, $\varphi\in\lip_c(M)$. Then, from the coarea formula
\begin{equation}\label{eq_coareat}
	\begin{array}{l}
		\disp \int_M \left(|A|^{1+ \frac{n-2}{n}(1+\delta)} u^{-t\delta}\right)^2\eta(\gr)^{2+ 2\frac{n-2}{n}(1+\delta)} \le \disp C \int_M u^{- 2t\delta} \eta'(\gr)^{2+ 2\frac{n-2}{n}(1+\delta)}|\nabla \gr|^{2+ 2\frac{n-2}{n}(1+\delta)} \\[0.5cm]
		\qquad \qquad \qquad \qquad \le \ \ \disp C \disp \int_0^1 \eta'(s)^{2+ 2\frac{n-2}{n}(1+\delta)} \left[ \int_{\Sigma_s} u^{- 2t\delta}|\nabla \gr|^{1+ 2\frac{n-2}{n}(1+\delta)} \right] \di s,
	\end{array}
\end{equation}
where 
\[
\Sigma_s \doteq \{\gr=s\}. 
\]
From \eqref{grad_est_3_sec2} we have 
\begin{equation}\label{esti_pointt}
	u^{-t\delta}|\nabla \gr|^{\frac{n-2}{n}(1+\delta)} \le C u^{-t\delta +  \frac{n-1}{n}(1+\delta)\frac{n-2}{n}} \gr^{\frac{n-1}{n}(1+\delta)\frac{2}{n}}.
\end{equation}
We  choose $t \in (0,1]$ and $\delta \ge 0$ satisfying  the following conditions:
\begin{equation}\label{eq_tt}
	\begin{cases}
		\disp 4\frac{n-1}{n^2}(1+\delta)  \ge  \disp 1 + 2\frac{n-2}{n}(1+\delta) \\[0.2cm]
		t \delta  =  \disp \frac{n-1}{n}(1+\delta)\frac{n-2}{n} \\[0.2cm]
		\disp \left[t - \frac{n-2}{n}\right] \delta + \frac{2}{n}  >  0. 
	\end{cases}
\end{equation}
The first inequality has a non-negative solution $\delta$ if and only if $n=3$, which accounts for the dimension restriction. For $n=3$, the three conditions become
\[
\delta \ge \frac{7}{2}, \qquad t = \frac{2}{9}\frac{1+\delta}{\delta}, \qquad \left[\frac{1}{3}-t\right]\delta < \frac{2}{3}. 
\]
The choice $\delta = 7/2$ and $t = 2/7$ satisfies all of them. 

Once a pair $(\delta, t)$ satisfying \eqref{eq_tt} is found, by the second condition in \eqref{eq_tt}, inequality \eqref{esti_pointt} becomes  
$$
u^{-t\delta}|\nabla \gr|^{\frac{n-2}{n}(1+\delta)} \le C \gr^{\frac{n-1}{n}(1+\delta)\frac{2}{n}}.
$$
Inserting this into \eqref{eq_coareat}, by the first condition in \eqref{eq_tt} and by  \eqref{int_nablag} we deduce that
\[
\begin{array}{lcl}
	\disp \int_M \left(|A|^{1+ \frac{n-2}{n}(1+\delta)} u^{-t\delta}\right)^2\eta(\gr)^{2+ 2\frac{n-2}{n}(1+\delta)} & \le & \disp C \disp \int_0^1 \eta'(s)^{2+ 2\frac{n-2}{n}(1+\delta)} s^{4\frac{n-1}{n^2}(1+\delta)} \left[ \int_{\Sigma_s} |\nabla \gr|\right] \di s \\[0.5cm]
	& \le & C \disp \int_0^1 \eta'(s)^{2+ 2\frac{n-2}{n}(1+\delta)} s^{4\frac{n-1}{n^2}(1+\delta)} \di s\\[0.5cm]
	&\le& \disp C \int_0^1 \eta'(s)^{1+p} s^{p} \di s,
\end{array}
\]
where $p \doteq  1 + 2\frac{n-2}{n}(1+\delta)>0$. For $R>1$ we finally choose
$$
\eta(s)=\begin{cases}
	0 & \text{ if } s\in\left[0,R^{-2}\right), \\
	2+\frac{\log s}{\log R} & \text{ if } s\in\left[R^{-2},R^{-1}\right), \\
	1 & \text{ if } s\ge R^{-1},
\end{cases}
$$ 
to obtain 
\[
\int_{\{ \gr > R^{-1}\}} 
\left(|A|^{1+ \frac{n-2}{n}(1+\delta)} u^{-t\delta}\right)^2 \le \frac{C}{(\log R)^{1+p}} \int_{R^{-2}}^{R^{-1}}s^{-1} = \frac{C}{(\log R)^{p}}  \, . 
\] 
Letting $R\rightarrow\infty$, being $p>0$ we conclude 
$$
|A|\equiv 0 \quad \text{ on } M. 
$$ 
\end{proof}
\subsection*{Final remarks}
Neither the assumption $|A| \in L^\infty(M)$ nor the vanishing of $\gr$ at infinity is actually necessary to establish the gradient estimate \eqref{grad_est_3_sec2} for $\gr$. Indeed, this inequality holds for every connected, complete, $2$-sided stable minimal hypersurface $M^n \to N^{n+1}$ provided $N$ satisfies $\overline{\Sec} \ge 0$ and $\gr$ exists on $M$; see Corollary \ref{cor_gradesti_minimal} below. A natural question is whether, in this more general setting, $M$ must be totally geodesic if $n \ge 3$ is small enough, or at least if $n=3$. To the best of our knowledge, no counterexample has been found so far. A positive answer is straightforward if $\gr$ does not exist on $M$. Indeed, by a standard characterization (see \cite[Theorem 5.1]{Grygorian_book}), in this case $M$ is parabolic, i.e. positive superharmonic functions on $M$ are constant. The existence of a positive solution to
\[
\Delta u + \big[ |A|^2 + \overline{\Ric}(\nu,\nu)\big]u \le 0,
\] 
granted by the stability condition, therefore forces both
\begin{equation}\label{eq_forced}
|A|^2 \equiv 0 \qquad \text{and} \qquad \overline{\Ric}(\nu,\nu) \equiv 0.
\end{equation}
However, if $M$ is non-parabolic things are much more subtle. For instance, the counterexample in \cite[Example 1.2]{cls} shows that the second property in \eqref{eq_forced} may fail. Remarkably, \eqref{eq_forced} does hold in dimension $n=3$ if $N$, beyond the above assumptions, has uniformly positive scalar curvature and weakly bounded geometry (for instance, if it has globally bounded sectional curvature); see \cite{cls} for more details. Apart from this result, techniques suitable for treating the problem in general ambient spaces with non‑negative sectional curvature are rather scarce in the literature. In this respect, we note that the above proof of the stable Bernstein theorem relies only on two facts:
\begin{itemize}
	\item[$a)$] the simpler form of Simons' equation in $\mathbb{R}^{n+1}$ compared to more general ambient spaces, where the additional terms in Simons' equation cannot be absorbed into our integral estimates;
	\item[$b)$] the vanishing of $\gr$ at infinity.  
\end{itemize}
In particular, we do not use properties of the distance function $r(x) = |x|$ in $\R^{n+1}$, especially, the fact that the ambient Hessian of $r^2$ is (at least) twice the identity. One may question this claim by observing that, in the arguments by \cite{All,micsim,HS}, such property is needed to deduce the validity of the isoperimetric inequality on $M \to \R^{n+1}$, which by \cite{davies,ni,car} implies the vanishing of $\gr$ at infinity. However, Brendle \cite{brendle} showed that any minimal hypersurface $M^n \to N^{n+1}$ in a manifold with $\overline{\Sec} \ge 0$ satisfies the isoperimetric inequality provided that balls $B_r^N \subset N$ centered at a fixed origin have maximal volume growth:
\begin{equation}\label{eq_bound_maxvol}
\lim_{r \to \infty} \frac{|B_r^N|}{r^{n+1}} > 0.
\end{equation}
In these ambient spaces $N$, the above Hessian lower bound on $r^2$ generally fails.\par
Summarizing, in our proof, fact $a)$ is the only property forcing to restrict to ambient $\R^4$ instead of general $4$-manifolds with non-negative sectional curvature and satisfying \eqref{eq_bound_maxvol}. Regarding fact $b)$, it might be conceivable that the integral estimates in the above proof could be refined so as to reach the conclusion without knowing, \emph{a-priori}, the validity of $b)$. If this were the case, it would help to address ambient spaces where \eqref{eq_bound_maxvol} fails.

\section{Manifolds with spectral Ricci bounds}\label{sec_spectral}

Let $(M^n,g)$ be a complete Riemannian manifold, and let $V \in L^\infty_\loc(M)$. An operator $-\Delta - V$ is said to be non-negative in a domain $\Omega \subset M$ if the associated quadratic form 
\[
Q_V(\psi) = \int_M|\nabla \psi|^2 - V \psi^2
\]
is non-negative for each $\psi \in \lip_c(\Omega)$ (equivalently, $\psi \in H^1_c(M)$, the set of $H^1$ functions which are zero a.e. away from some compact set). It is known by \cite{fischer_schoen} (see also \cite[Lemma 3.10]{prs_book} in the present generality) that $-\Delta - V \ge 0$ is equivalent to any of the following: 
\begin{itemize}
	\item[(i)] The existence of $0 < u \in H^1_\loc(\Omega)$ solving $\Delta u + Vu \le 0$ weakly in $\Omega$; 
	\item[(ii)] The existence of $0 < u \in C^{1,\alpha}_\loc(\Omega)$ solving $\Delta u + Vu = 0$ weakly in $\Omega$;
\end{itemize}

The operator $-\Delta - V \ge 0$ is said to be \emph{subcritical} if it admits a positive Green kernel with pole at a given origin $o$, i.e. a positive solution to
\[
\Delta u + Vu = -\delta_o,
\]
where $\delta_o$ is the Dirac delta-function at $o$, and \emph{critical} otherwise. For $V \equiv 0$, this dichotomy reproduces the standard one between parabolic and non-parabolic manifolds, corresponding respectively to manifolds whose Laplacian $-\Delta$ is critical and subcritical. Various characterizations of the subcriticality of $-\Delta - V$, and a thorough treatment of the role played by the potential $V$, were given by Murata \cite[Section 2]{murata} and Pinchover \& Tintarev \cite{pinchover, pinchover2, pincho_tinta}. We also suggest the reader to consult \cite{bmr,car}, \cite[Section 2]{cmmr_criticality} and the reference therein for more information.

\begin{remark}
	If $-\Delta -V$ is subcritical, one can consider the minimal positive Green kernel $u$, obtained as the locally uniform limit of solutions to 
	\[
	\left\{ \begin{array}{ll}
		\Delta u_j + Vu_j = -\delta_o & \quad \text{in } \, \Omega_j, \\[0.2cm]
		u_j =0 & \quad \text{on } \, \partial \Omega_j,
	\end{array}\right.
	\]
	for an increasing exhaustion $\{\Omega_j\}$ of $M$ by smooth, relatively compact open sets. Condition $-\Delta - V \ge 0$ guarantees that $u$ is independent of the chosen exhaustion and hence uniquely defined.
\end{remark}

Next, recall that an operator $-\Delta - V \ge 0$ satisfies the following useful property: for each $z \in H^1_\loc(M)$, the inequality 
\begin{equation}\label{eq_id_basic_impo}
\int_M z(\Delta z + V z) \psi^2 \le \int_M z^2 |\nabla \psi|^2
\end{equation}
holds weakly for each $\psi \in \lip_c(\Omega)$, that is,
\[
\int_M V z^2 \psi^2 - \int_M \langle \nabla z, \nabla(\psi^2 z) \rangle  \le \int_M z^2 |\nabla \psi|^2.
\]
Indeed, it is enough to observe that 
\[
\int_M V z^2 \psi^2 - \int_M \langle \nabla z, \nabla(\psi^2 z) \rangle = \int_M V z^2 \psi^2 - \int_M |\nabla(z\psi)|^2 + \int_M z^2 |\nabla \psi|^2
\]
and use that $Q_V(z\psi) \ge 0$.
%
	
%


%
%

\subsection{Construction of subsolutions}

In investigating problems from Riemannian Geometry, it often happens that certain geometrically relevant functions $\psi : M \to \R$ are $C^2$ in $\{\psi > 0\}$ and there they solve
\[
\Delta \psi + a \psi \ge -\kappa \frac{|\nabla \psi|^2}{\psi}
\]
for some $a \in C(M)$ and $\kappa \in (-1,\infty)$, see \cite{prs_book} for a detailed account and examples. The coefficient $\kappa$ typically comes from a refined Kato inequality. As a consequence, $w = \psi^{\kappa+1}$ satisfies
\[
\Delta w + Ww \ge 0 \qquad \text{in } \, \{w>0\},
\]
where we set $W = (1+\kappa )a$. Furthermore, the problem under consideration may naturally lead to the non-negativity of some operator $-\Delta - V$ in $M$, where $V$ relates to $W$ (typically, $W = \tau V$ for some $\tau \in \R^+$). This is the case, for instance, of minimal hypersurfaces in ambient spaces with controlled sectional (or, more general, bi-Ricci) curvature \cite{cmmr_criticality}, or solitons for the Ricci and mean curvature flows \cite{rsol,msol, cmbook}, just to name a few. Picking a positive function $u \in C^2(M)$ satisfying 
\[
\Delta u + Vu \le 0,
\]
it is customary to consider the weighted Laplacian 
\[
\LL_{u} = u^{-2} \diver\left( u^2 \nabla \right),
\]
the reason being that $\xi = w/u$ solves
\begin{equation}\label{eq_doob}
\LL_u \xi \ge (V-W) \xi
\end{equation}
and thus, if $V \ge W$, $\xi$ is subharmonic for the weighted Laplacian $\LL_u$. Such procedure is called the \emph{Doob transform}, see \cite{car}. The operator $\LL_u$ is symmetric with respect to the weighted measure $u^2 \di V_g$, with $\di V_g$ the Riemannian volume of $M$, and to study solutions to \eqref{eq_doob} one is led to consider the weighted manifold $M_u \doteq (M^n,g,u^2 \di V_g)$. However, often it is not possible to control the geometry of $M_u$, for instance, its Bakry-Emery Ricci tensor or the volume growth of balls with respect to $u^2 \di V_g$. A notable exception was discovered in \cite{CMR}, see also \cite[Theorem 3.12]{cmmr_criticality} and \cite{car_mon_tew}. 

Our contribution is the next Lemma where, out of suitable powers of $w$ and $u$, we produce subsolutions of equations like \eqref{eq_doob} but \emph{still involving the unweighted Laplacian,} see item $(i)$. To the best of out knowledge, so far inequality \eqref{eq_magic} below has not  been observed, in this general form, in the literature, although it is somehow related to some computations in \cite[Step 4, Theorem 8.9]{prs_book}. Item $(i)$ is linked to an important convexity property in criticality theory first observed by Pinchover \cite{pinchover2}, which we include for later use in item $(ii)$. However, even though both $(i)$ and $(ii)$ deal with linear inequalities,  $(i)$ produces a subsolution from a sub- and a supersolution, while $(ii)$ only concerns supersolutions.

\begin{lemma}[\textbf{Key Lemma}]\label{prop_magic}
	Let $w,v,v_0 ,v_1$ be positive $C^2$ functions defined in an open set $\Omega \subset M$.
	\begin{itemize}
		\item[$(i)$] Assume that 
		\[
		\left\{ \begin{array}{l}
			\Delta w + W w \ge 0 \\[0.2cm]
			\Delta v + V v \le 0 
		\end{array}\right. \qquad \text{in } \, \Omega, 	
		\]
		for some $V,W \in C(\Omega)$. Then, for each $\delta \ge 0$ the function
		\[
		\xi_\delta \doteq w^{1+\delta} v^{-\delta}
		\]
		solves
		\begin{equation}\label{eq_magic}
			\Delta \xi_\delta \ge \big[\delta V - (1+\delta)W\big]\xi_\delta + \delta(1+\delta)\left| \nabla \log \left(\frac{w}{v}\right)\right|^2 \xi_\delta. 
		\end{equation}
		\item[$(ii)$] \cite[Theorem 3.1]{pinchover2} Assume that 
		\[
		\left\{ \begin{array}{l}
			\Delta v_0 + V_0 v_0 \le 0 \\[0.2cm]
			\Delta v_1 + V_1 v_1 \le 0 
		\end{array}\right. \qquad \text{in } \, \Omega, 	
		\]
		for some $V_0,V_1 \in C(\Omega)$. Then, for each $t \in [0,1]$, the function
		\[
		v_t \doteq v_0^{1-t}v_1^t
		\]
		solves
		\begin{equation}\label{eq_magic_super}
			\Delta v_t + V_t v_t \le - t(1-t) \left| \nabla \log \left(\frac{v_1}{v_0}\right)\right|^2 v_t \qquad \text{where } \ \ V_t \doteq (1-t)V_0 + tV_1.
		\end{equation}
	\end{itemize}
\end{lemma}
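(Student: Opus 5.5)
Both items are pointwise identities followed by a sign argument, and I would prove them with the same logarithmic computation. For a product of powers $\xi = e^{\phi}$ with $\phi = \sum_i a_i \log f_i$, one has $\Delta \xi = \xi\,(\Delta \phi + |\nabla \phi|^2)$. For a positive $C^2$ function $f$ we also have $\Delta \log f = \frac{\Delta f}{f} - |\nabla \log f|^2$. So the whole computation is in terms of $\alpha \doteq \nabla \log w$ and $\beta \doteq \nabla \log v$ (respectively $\nabla\log v_0$, $\nabla\log v_1$), together with the quantities $\Delta w/w$ and $\Delta v/v$, which are controlled by the hypotheses.

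For item $(i)$, set $\phi = (1+\delta)\log w - \delta \log v$. Then
\[
\frac{\Delta \xi_\delta}{\xi_\delta} = (1+\delta)\frac{\Delta w}{w} - \delta \frac{\Delta v}{v} - (1+\delta)|\alpha|^2 + \delta|\beta|^2 + |(1+\delta)\alpha - \delta\beta|^2 .
\]
The hypotheses give $\Delta w / w \ge -W$ and $-\Delta v/v \ge V$. Since $1+\delta > 0$ and $\delta \ge 0$, these give the zeroth order term $\delta V - (1+\delta)W$. Expanding the gradient terms,
\[
-(1+\delta)|\alpha|^2 + \delta|\beta|^2 + (1+\delta)^2|\alpha|^2 - 2\delta(1+\delta)\langle\alpha,\beta\rangle + \delta^2|\beta|^2 = \delta(1+\delta)|\alpha-\beta|^2 .
\]
This is exactly $\delta(1+\delta)|\nabla\log(w/v)|^2$, which yields \eqref{eq_magic}.

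For item $(ii)$, the same computation with $\phi = (1-t)\log v_0 + t\log v_1$ gives
\[
\frac{\Delta v_t}{v_t} = (1-t)\frac{\Delta v_0}{v_0} + t\frac{\Delta v_1}{v_1} - (1-t)|\beta_0|^2 - t|\beta_1|^2 + |(1-t)\beta_0 + t\beta_1|^2 .
\]
Because $t\in[0,1]$, both weights $1-t$ and $t$ are nonnegative, so the hypotheses give $\Delta v_t / v_t \le -V_t$ plus gradient terms. Those gradient terms reduce to $-t(1-t)|\beta_1-\beta_0|^2$, which is the variance identity for convex combinations. This gives \eqref{eq_magic_super}.

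There is no real obstacle here. The only points needing care are the signs of the exponents when applying the differential inequalities, which is why $\delta \ge 0$ and $t \in [0,1]$ are required, and the algebraic collapse of the quadratic gradient terms into a single square.
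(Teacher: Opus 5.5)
Your proposal is correct and follows essentially the same route as the paper: write $\Delta \xi = \xi\,(\Delta\log\xi + |\nabla\log\xi|^2)$, apply the hypotheses with the nonnegative weights $1+\delta,\ \delta$ (respectively $1-t,\ t$), and collapse the quadratic gradient terms into $\delta(1+\delta)|\nabla\log(w/v)|^2$ (respectively $-t(1-t)|\nabla\log(v_1/v_0)|^2$). Your algebra checks out in both items.
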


\begin{proof} 
	We first prove $(i)$. Write 
	\begin{equation}\label{eq_pri}
		\Delta \xi_\delta = \Delta\left( e^{\log \xi_\delta}\right) = \xi_\delta \left( \Delta \log \xi_\delta + |\nabla \log \xi_\delta|^2 \right).
	\end{equation}
	We thus compute
	\[
	\Delta \log \xi_\delta = (1+\delta) \Delta \log w - \delta \Delta \log v \ge \delta V - (1+\delta)W - (1+\delta)|\nabla \log w|^2 + \delta |\nabla \log v|^2
	\]
	and 
	\[
	|\nabla \log \xi_\delta|^2 = (1+\delta)^2|\nabla \log w|^2 + \delta^2|\nabla \log v|^2 - 2\delta(1+\delta) \langle \nabla \log w, \nabla \log v \rangle,
	\]
	where $\langle \,\, \rangle$ is the metric on $M$, to deduce
	\[
	\begin{array}{lcl}
		\Delta \log \xi_\delta + |\nabla \log \xi_\delta|^2 & \ge & \disp \delta V - (1+\delta)W + \delta(1+\delta)|\nabla \log w|^2 + \delta(1+\delta)|\nabla \log v|^2 \\[0.2cm] 
		& & \disp  - 2\delta(1+\delta) \langle \nabla \log w, \nabla \log v \rangle \\[0.2cm]
		& = & \disp \delta V - (1+\delta)W + \delta(1+\delta)\left|\nabla \log \left(\frac{w}{v}\right)\right|^2
	\end{array}
	\]
	which inserted into \eqref{eq_pri} leads to the desired inequality.\\ 
	The proof of $(ii)$ is analogous to $(i)$: we compute
	\[
	\begin{array}{lcl}
		\Delta \log v_t & = & (1-t) \Delta \log v_0 + t \Delta \log v_1 \le -V_t - (1-t)|\nabla \log v_0|^2 - t |\nabla \log v_1|^2\, , \\[0.2cm]
		|\nabla \log v_t|^2 & = & (1-t)^2 |\nabla \log v_0|^2 + t^2|\nabla \log v_1|^2 + 2t(1-t) \langle \nabla \log v_0, \nabla \log v_1 \rangle\, .
	\end{array}
	\]
	Summing up, 
	\[
	\Delta \log v_t + |\nabla \log v_t|^2 \le -V_t - t(1-t)\left|\nabla \log \left( \frac{v_1}{v_0}\right)\right|^2
	\]
	and from
	\[
	\Delta v_t = v_t \left( \Delta \log v_t + |\nabla \log v_t|^2 \right)
	\]
	the conclusion follows.
\end{proof}

\begin{remark}
	As shown in \cite{pinchover2}, $(ii)$ allows to prove that if 
	\[
	-\Delta - V_0 \ge 0, \quad -\Delta - V_1 \ge 0, \qquad V_t = (1-t)V_0 + tV_1,
	\]
	then $-\Delta - V_t$ is subcritical for $t \in (0,1)$ unless $V_0 \equiv V_1$. 
	To the best of our knowledge, the first application of this useful property to geometric analysis was the spectral splitting theorem in \cite{cmmr_criticality}. 
On the other hand, interestingly, $(ii)$ is a tool of common use for some problems in reaction-diffusion equations, see for instance \cite[Proposition 5.7]{bere_hamel} and the references therein. 
\end{remark}

Applying the above result to the gradient of a harmonic function we have the following:

\begin{proposition}\label{cor_magic_general}
	Let $(M^n,g)$ be a Riemannian manifold, and assume that
	\[
	\Ric \ge - \frac{n-1}{n-2} \tau V g, \qquad -\Delta - V \ge 0
	\]
	for some $\tau \in (0,\infty)$ and $V \in C(M)$. Let $\Omega \subset M$ be a domain and $0 < u,\ha \in C^2(\Omega)$ solve
	\[
	\Delta \ha = 0, \qquad \Delta u + Vu \le 0 \qquad \text{in } \, \Omega.
	\]
	
	Then, for each $\delta \ge 0$, the function $\xi_\delta \in C(\Omega)$ given by 
	\[
	\xi_\delta \doteq |\nabla \ha|^{\frac{n-2}{n-1}(1+\delta)} u^{-\delta}
	\]
	satisfies in the set $\{\nabla \ha \neq 0\}$ 
	\begin{equation}\label{eq_xidelta}
		\begin{array}{rcl}
			\Delta \xi_\delta & \ge & \disp (\delta - \tau - \delta\tau) V\xi_\delta + \delta(1+\delta)\xi_\delta \left| \nabla \log \left( \frac{|\nabla \ha|^{\frac{n-2}{n-1}}}{u}\right)\right|^2\\[0.5cm]
			\Delta \xi_\delta + V\xi_\delta & \ge & \disp (1+\delta)(1-\tau) V \xi_\delta + \delta(1+\delta)\xi_\delta \left| \nabla \log \left( \frac{|\nabla \ha|^{\frac{n-2}{n-1}}}{u}\right)\right|^2.
		\end{array}
	\end{equation}
	Moreover, $\xi_\delta \in H^1_\loc(\Omega)$ and the inequalities 
	\[
	\Delta \xi_\delta \ge \disp (\delta - \tau - \delta\tau) V\xi_\delta, \qquad \Delta \xi_\delta + V\xi_\delta \ge \disp (1+\delta)(1-\tau) V \xi_\delta 
	\]
	hold weakly in the entire $\Omega$.		
\end{proposition}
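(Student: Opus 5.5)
The plan is to reduce the statement to Lemma \ref{prop_magic} $(i)$, with $w \doteq |\nabla \ha|^{\frac{n-2}{n-1}}$ and $v \doteq u$.

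\textbf{Step 1: a differential inequality for $w$.} In $\{\nabla \ha \neq 0\}$, Bochner's formula for the harmonic function $\ha$, combined with the refined Kato inequality $|\nabla^2 \ha|^2 \ge \frac{n}{n-1}|\nabla|\nabla \ha||^2$ (valid since $\Delta \ha = 0$) and the Ricci lower bound, gives
\[
|\nabla \ha|\Delta|\nabla \ha| \ge \frac{1}{n-1}|\nabla |\nabla \ha||^2 - \frac{n-1}{n-2}\tau V |\nabla \ha|^2 .
\]
This is exactly the form $\Delta\psi + a\psi \ge -\kappa|\nabla\psi|^2/\psi$ discussed before the Key Lemma, with $\kappa = -\frac{1}{n-1}$ and $a = \frac{n-1}{n-2}\tau V$. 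Setting $w = \psi^{1+\kappa} = |\nabla \ha|^{\frac{n-2}{n-1}}$, the direct computation
\[
\Delta \psi^\beta = \beta\psi^{\beta-1}\Delta\psi + \beta(\beta-1)\psi^{\beta-2}|\nabla\psi|^2 \quad \text{with } \beta = \tfrac{n-2}{n-1}
\]
should cancel the gradient terms and yield $\Delta w + \tau V w \ge 0$. Hence $W = \tau V$.

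\textbf{Step 2: apply the Key Lemma.} Lemma \ref{prop_magic} $(i)$ with $W = \tau V$ and $V$ gives
\[
\Delta \xi_\delta \ge \big[\delta - (1+\delta)\tau\big] V\xi_\delta + \delta(1+\delta)\,\xi_\delta \left|\nabla \log(w/u)\right|^2 ,
\]
which is the first line of \eqref{eq_xidelta}. Adding $V\xi_\delta$ to both sides, and using $1 + \delta - \tau - \delta\tau = (1+\delta)(1-\tau)$, gives the second line.

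\textbf{Step 3: extension across the critical set.} This is the main obstacle: to show that $\xi_\delta \in H^1_\loc(\Omega)$ and that the inequalities hold weakly across $\{\nabla \ha = 0\}$, where $\xi_\delta$ is only continuous (it vanishes there since $1+\delta > 0$). I would invoke Proposition \ref{prop_weakly} of the Appendix, which the paper already uses for the analogous function $z$ in Section \ref{sec_bernstein}. Its hypotheses should be that $\xi_\delta$ is nonnegative and continuous, $C^2$ where positive, and satisfies there $\Delta\xi_\delta \ge c\,V\xi_\delta$ with $V$ locally bounded.

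If one prefers a direct argument, the route is as follows. Test the pointwise inequality against $\phi\, \eta_\varepsilon(\xi_\delta)$, where $\eta_\varepsilon$ is a cutoff vanishing on $[0,\varepsilon]$ and $0 \le \phi \in C^\infty_c(\Omega)$. The signed term coming from $\eta_\varepsilon'$ has a favorable sign, and the squared-gradient term on the right of \eqref{eq_xidelta} (discarded after use) gives local $L^2$ bounds on $\nabla \xi_\delta$ on $\{\xi_\delta > \varepsilon\}$ that are uniform in $\varepsilon$. Alternatively, use Caccioppoli applied to $(\xi_\delta-\varepsilon)_+$. Letting $\varepsilon \to 0$ then gives $\xi_\delta \in H^1_\loc(\Omega)$ and the weak inequalities in all of $\Omega$, since $V\xi_\delta$ is locally bounded and continuous.
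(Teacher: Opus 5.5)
Your proposal is correct and follows the paper's proof essentially step for step. The paper also uses Bochner plus refined Kato to get $\Delta w + \tau V w \ge 0$ for $w = |\nabla \ha|^{\frac{n-2}{n-1}}$, applies Lemma \ref{prop_magic}~$(i)$ with $W = \tau V$ and $v = u$ to obtain \eqref{eq_xidelta}, and invokes Proposition \ref{prop_weakly} (with $P$ a constant multiple of $V$, and $\{\xi_\delta>0\}=\{\nabla \ha \neq 0\}$) for the weak inequalities on all of $\Omega$.

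One caveat on your optional direct argument: the squared-gradient term gives no $H^1$ bound when $\delta = 0$, since it vanishes there. The Caccioppoli estimate on $(\xi_\delta-\varepsilon)_+$, which is exactly the Appendix argument, is the route that works for all $\delta \ge 0$.
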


\begin{remark}\label{rem_vertical}
	Observe that $\xi_\delta$ may arrive ``vertically" on $\partial\{\xi_\delta > 0\}$ as the exponent of $|\nabla \ha|$ in the definition of $\xi_\delta$ may be smaller than $1$. Thus, the $H^1_\loc$ regularity of the extension by zero of $\xi_\delta$ is not obvious. Equation \eqref{eq_xidelta} will guarantee this property, see the Appendix.
\end{remark}

\begin{proof}
	By the Bochner formula, the refined Kato inequality and the assumed Ricci lower bound, $|\nabla \ha|$ solves
	\[
	\Delta |\nabla \ha| \ge \frac{1}{n-1} \frac{|\nabla |\nabla \ha||^2}{|\nabla \ha|} - \frac{n-1}{n-2}\tau V |\nabla \ha|  	
	\]
	pointwise in $\{\nabla h \neq 0\}$, whence $w \doteq |\nabla \ha|^{\frac{n-2}{n-1}}$ solves
	\[
	\Delta w + \tau V w \ge 0 \qquad \text{in } \, \{w>0\}\, . 
	\]
	The inequalities for $\Delta \xi_\delta$ and $\Delta \xi_\delta + V \xi_\delta$ are then immediate from $(i)$ in Lemma \ref{prop_magic}, applied with $W \doteq \tau V, \ v \doteq u$. The weak inequalities in all of $\Omega$ readily follow from Proposition \ref{prop_weakly} in the Appendix.  

\end{proof}

Let now $(M^n,g) \to \R^{n+1}$ be a  $2$-sided minimal hypersurface, and define the operator
\[
J_\tau u \doteq \Delta u + \frac{n-2}{n \tau}|A|^2 u,
\]
for $\tau \in (0,1]$. Note that the (negative of the) classical stability operator is obtained for the choice
\[
\tau = \frac{n-2}{n}.
\]
An application of Lemma \ref{prop_magic} to the function $|A|$ gives:

\begin{proposition}\label{cor_magic_minimal}
	Let $M^n \to \R^{n+1}$ be a minimal hypersurface, $\Omega \subset M$ be an open subset and $0 < u \in C^2(\Omega)$ solve
	\[
	J_\tau u  \le 0 \qquad \text{in } \, \Omega,
	\]
	for some $\tau \in (0,\infty)$. 
	
	Then, for each $\delta \ge 0$, the function $z_\delta \in C(\Omega)$ given by 
	\[
	z_\delta \doteq |A|^{\frac{n-2}{n}(1+\delta)} u^{-\delta}
	\]
	satisfies, in the set $\Omega\cap\{|A|>0\}$,
	\begin{equation}\label{eq_xidelta_A}
		\begin{array}{lcl}
			\Delta z_\delta & \ge & \disp \frac{n-2}{n\tau}(\delta - \tau -  \delta\tau)|A|^2 z_\delta + \delta(1+\delta)z_\delta \left| \nabla \log \left( \frac{|A|^{\frac{n-2}{n}}}{u}\right)\right|^2\,,\\[0.5cm]
			J_\tau z_\delta & \ge & \disp \frac{n-2}{n \tau}(1-\tau)(1+\delta)|A|^2 z_\delta + \delta(1+\delta)z_\delta \left| \nabla \log \left( \frac{|A|^{\frac{n-2}{n}}}{u}\right)\right|^2\, .
		\end{array}
	\end{equation}	
		
	Moreover, for each $t \in [0,1]$ the function $z_{\delta,t} \in C(\Omega)$ given by 
	\[
	z_{\delta,t} \doteq |A|^{\frac{n-2}{n}(1+\delta)} u^{-t\delta}
	\]
	satisfies, in the set $\Omega\cap\{|A|>0\}$,  
	\begin{equation}\label{eq_xidelta_A_2}
		\begin{array}{lcl}
			\Delta z_{\delta,t} & \ge & \disp \frac{n-2}{n\tau}(t\delta - \tau -  \delta\tau)|A|^2 z_{\delta,t}\, ,  \\[0.5cm]
			J_\tau z_{\delta,t} & \ge & \disp \frac{n-2}{n \tau}(t\delta - \tau -  \delta\tau + 1)|A|^2 z_{\delta,t}.
		\end{array}
	\end{equation}
	Finally, $z_\delta, z_{\delta,t} \in H^1_\loc(\Omega)$, and the inequalities \eqref{eq_xidelta_A_2} hold weakly in the entire $\Omega$. 		
\end{proposition}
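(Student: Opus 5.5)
The plan is to reduce everything to Lemma \ref{prop_magic}, as in the proof of Proposition \ref{cor_magic_general}. The analogue of the Bochner/Kato step will be Simons' equation combined with the refined Kato inequality for $A$.

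\emph{Step 1: a subsolution built from $|A|$.} On $\{|A|>0\}$, Simons' equation \eqref{EqSimons} and $|\nabla A|^2 \ge \frac{n+2}{n}|\nabla|A||^2$ give \eqref{eq_inesimons}, which can be written as
\[
\Delta|A| \ge \frac{2}{n}\frac{|\nabla|A||^2}{|A|} - |A|^2\,|A|.
\]
Set $w \doteq |A|^{a}$ with $a=\frac{n-2}{n}$. Using $\Delta \psi^a = a\psi^{a-1}\big(\Delta\psi + (a-1)|\nabla\psi|^2/\psi\big)$ and $a-1=-\frac2n$, the gradient terms cancel exactly. This yields
\[
\Delta w + \frac{n-2}{n}|A|^2 w \ge 0 \qquad \text{in } \Omega\cap\{|A|>0\}.
\]
With $V \doteq \frac{n-2}{n\tau}|A|^2$, so that $J_\tau = \Delta + V$, this says $\Delta w + \tau V w \ge 0$, i.e.\ $W=\tau V$ in the notation of Lemma \ref{prop_magic}.

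\emph{Step 2: the pointwise inequalities.} For $z_\delta$ we apply Lemma \ref{prop_magic} $(i)$ with this $w$, with $v=u$ (which satisfies $\Delta u + Vu\le 0$) and with $W=\tau V$. The bracket becomes $\delta V-(1+\delta)\tau V = \frac{n-2}{n\tau}(\delta-\tau-\delta\tau)|A|^2$, and the gradient term $\delta(1+\delta)\big|\nabla\log(|A|^{\frac{n-2}{n}}/u)\big|^2 z_\delta$ is exactly the one in \eqref{eq_xidelta_A}. Adding $Vz_\delta$ to both sides gives the $J_\tau$ version, since $\delta - \tau - \delta\tau + \tau = (1+\delta)(1-\tau)$.

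For $z_{\delta,t}$ we first apply Lemma \ref{prop_magic} $(ii)$ with $v_0\equiv 1$, $V_0\equiv 0$, $v_1=u$ and $V_1=V$. This shows $\Delta u^t + tVu^t \le 0$, after discarding the nonpositive gradient term. Then Lemma \ref{prop_magic} $(i)$ with $v=u^t$ and potential $tV$ gives the bracket $t\delta V - (1+\delta)\tau V$. We drop the nonnegative gradient term and add $Vz_{\delta,t}$; this gives \eqref{eq_xidelta_A_2}.

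\emph{Step 3: regularity and weak validity in all of $\Omega$.} This is the main obstacle. The exponent $\frac{n-2}{n}(1+\delta)$ may be smaller than $1$, so $z_\delta$ and $z_{\delta,t}$ (extended by zero on $\{|A|=0\}$) can reach the zero set ``vertically'', as in Remark \ref{rem_vertical}. Their $H^1_\loc$ regularity, and the validity of the inequalities across $\{|A|=0\}$, therefore do not follow from direct differentiation.

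For this I would invoke Proposition \ref{prop_weakly} in the Appendix, exactly as in Proposition \ref{cor_magic_general}. Its hypotheses are satisfied because of the following facts:
\begin{itemize}
\item $z_\delta$ and $z_{\delta,t}$ are nonnegative and continuous on $\Omega$, and vanish exactly on $\{|A|=0\}$;
\item they are $C^2$ on the positivity set;
\item on that set they satisfy $\Delta z \ge -c\,z$ with $c = \frac{n-2}{n\tau}\,|\delta - \tau-\delta\tau|\,|A|^2$ (respectively the $t$-version), which is locally bounded because $A$ is smooth and $u>0$ is $C^2$.
\end{itemize}
The key point is that the lower bound on $\Delta z$ is linear in $z$ with a locally bounded coefficient. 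The Appendix's truncation argument (testing with $(z-\eps)^+$-type cutoffs and letting $\eps\to 0$) then gives both $z\in H^1_\loc(\Omega)$ and the weak inequality in all of $\Omega$.
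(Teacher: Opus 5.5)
Your proof is correct and follows the paper's argument essentially step for step. The ingredients match: $w=|A|^{\frac{n-2}{n}}$ as a subsolution with $W=\tau V$; Lemma \ref{prop_magic}$(i)$ first with $v=u$ and then with $v=u^t$; and Proposition \ref{prop_weakly} for the $H^1_\loc$ and weak statements. Your use of Lemma \ref{prop_magic}$(ii)$ with $v_0\equiv 1$ is just the paper's direct computation of $\Delta u^t$.

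There are two trivial slips. First, adding $Vz_\delta$ adds $1$, not $\tau$, to the bracket; the identity is $\delta-\tau-\delta\tau+1=(1+\delta)(1-\tau)$. Second, in Step 3 you should apply Proposition \ref{prop_weakly} with the signed potential $P=-\frac{n-2}{n\tau}(t\delta-\tau-\delta\tau)|A|^2$, which is also locally bounded, rather than its absolute value; otherwise you only recover a weaker weak inequality.
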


\begin{proof}
We perform computations in the set $\Omega\cap\{|A|>0\}$. From  \eqref{eq_inesimons}, 
	we deduce that 
	\[
	w \doteq |A|^{\frac{n-2}{n}} \qquad \text{solves} \qquad \Delta w + \frac{n-2}{n}|A|^2 w \ge 0.
	\]
	We apply $(i)$ in Lemma \ref{prop_magic} with the choices
	\[
	V \doteq \frac{n-2}{n \tau}|A|^2, \qquad W \doteq \frac{n-2}{n}|A|^2, \qquad v = u  
	\]
	to deduce \eqref{eq_xidelta_A}. Observe that, for $t \in [0,1]$, the function $u^t$ solves
	\[
	\Delta u^t + t \frac{n-2}{n \tau}|A|^2 u^t \leq - t(1-t)|\nabla \log u|^2 u^t \le 0, 
	\]
	whence to prove \eqref{eq_xidelta_A_2} it is enough to apply $(i)$ in Lemma \ref{prop_magic} with the choices 
	\[
	V \doteq t\frac{n-2}{n \tau}|A|^2, \qquad W \doteq \frac{n-2}{n}|A|^2, \qquad v = u^t
	\]
	and to discard the gradient terms. The weak inequalities in \eqref{eq_xidelta_A} and \eqref{eq_xidelta_A_2} follow from Proposition \ref{prop_weakly} in the Appendix. 
\end{proof}

In the next corollary, we infer from Proposition \ref{cor_magic_minimal} a weighted integral inequality related to Schoen-Simon-Yau's inequality \eqref{eq_SSY}.

\begin{corollary}\label{cor_magic_minimal_2}
	Let $M^n \to \R^{n+1}$ be a minimal hypersurface, $\Omega \subset M$ be an open subset and $0 < u \in C^2(\Omega)$ solve
	\[
	J_\tau u  \le 0 \qquad \text{in } \, \Omega,
	\]
	for some $\tau \in (0,\infty)$. 
	
	Then, for each $\delta \ge 0$, $t \in (0,1]$ and $\psi \in \lip_c(\Omega)$ it holds
	\begin{equation}\label{beautiful_1}
		\begin{array}{ll}
			\disp \frac{n-2}{n \tau}(t\delta - \tau -  \delta\tau + 1) \int \left(|A|^{1+ \frac{n-2}{n}(1+\delta)} u^{-t\delta}\right)^2\psi^2 \le \int \left(|A|^{\frac{n-2}{n}(1+\delta)} u^{-t\delta} \right)^2 |\nabla \psi|^2\, .  
		\end{array}
	\end{equation}
	Moreover, if
	\[
	t\delta - \tau -  \delta\tau + 1 > 0,
	\]
	then
	\begin{equation}\label{beautiful_2}
		C_{\delta, \tau, t} \int \left(|A|^{1+ \frac{n-2}{n}(1+\delta)} u^{-t\delta}\right)^2 |\psi|^{2+ 2\frac{n-2}{n}(1+\delta)} \le \int u^{- 2t\delta} |\nabla \psi|^{2+ 2\frac{n-2}{n}(1+\delta)}
	\end{equation}
	for some constant $C_{\delta, \tau,t} >0$. 
\end{corollary}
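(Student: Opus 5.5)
The plan is to combine the weak differential inequality for $z_{\delta,t}$ from Proposition \ref{cor_magic_minimal} with the basic inequality \eqref{eq_id_basic_impo}, and then to run the same Young-inequality absorption used in Section \ref{sec_bernstein}.

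\emph{Step 1: non-negativity of $-J_\tau$.} Set $V \doteq \frac{n-2}{n\tau}|A|^2$, so that $J_\tau = \Delta + V$. Since $0<u\in C^2(\Omega)$ solves $J_\tau u \le 0$ in $\Omega$, the equivalence of Fischer-Colbrie \& Schoen recalled at the beginning of Section \ref{sec_spectral} gives $-\Delta - V \ge 0$ in $\Omega$. This is the only place where $u$ is used beyond the construction of $z_{\delta,t}$. It is needed because the corollary does not assume stability explicitly.

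\emph{Step 2: proof of \eqref{beautiful_1}.} By Proposition \ref{cor_magic_minimal}, $z \doteq z_{\delta,t}\in H^1_\loc(\Omega)$ and
\[
J_\tau z \ge c\,|A|^2 z \quad \text{weakly in } \Omega, \qquad c \doteq \frac{n-2}{n\tau}(t\delta-\tau-\delta\tau+1).
\]
Fix $\psi\in\lip_c(\Omega)$. Since $z\ge 0$, the function $z\psi^2$ is a non-negative admissible test function with compact support. Testing the weak inequality against it gives
\[
c\int |A|^2 z^2\psi^2 \le \int z(\Delta z+Vz)\psi^2 ,
\]
where the right-hand side is understood in the weak form appearing in \eqref{eq_id_basic_impo}. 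Applying \eqref{eq_id_basic_impo} to the right-hand side, which is legitimate by Step 1, bounds it by $\int z^2|\nabla\psi|^2$. Finally, $|A|^2z^2 = \big(|A|^{1+\frac{n-2}{n}(1+\delta)}u^{-t\delta}\big)^2$, so this is exactly \eqref{beautiful_1}. No sign condition on $c$ is needed for this step.

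\emph{Step 3: proof of \eqref{beautiful_2} when $c>0$.} Let $a\doteq\frac{n-2}{n}(1+\delta)$ and $\varphi\in\lip_c(\Omega)$. Insert $\psi=\varphi^{1+a}$ into \eqref{beautiful_1}, using $|\nabla\psi| = (1+a)|\varphi|^a|\nabla\varphi|$. The right-hand side becomes $(1+a)^2\int |A|^{2a}u^{-2t\delta}|\varphi|^{2a}|\nabla\varphi|^2$. Split this integrand as
\[
\big(|A|^{2(1+a)}u^{-2t\delta}|\varphi|^{2(1+a)}\big)^{\frac{a}{1+a}}\,\big(u^{-2t\delta}|\nabla\varphi|^{2(1+a)}\big)^{\frac{1}{1+a}}.
\]
The powers of $u$ match: $-2t\delta\frac{a}{1+a}-2t\delta\frac{1}{1+a}=-2t\delta$. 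Apply Young's inequality with exponents $\frac{1+a}{a}$ and $1+a$ and a small parameter. This makes the first factor at most $\frac{c}{2}$ times the left-hand side, and the remainder is $C\int u^{-2t\delta}|\nabla\varphi|^{2(1+a)}$. Absorbing the first term into the left-hand side and renaming $\varphi$ as $\psi$ gives \eqref{beautiful_2}, with $C_{\delta,\tau,t}$ depending only on $n,\delta,\tau,t$.

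\emph{Main obstacle.} The delicate points are justification rather than computation. First, the absorption in Step 3 requires the left-hand side to be finite. This holds because $|A|$ is locally bounded, $u$ has a positive minimum on $\mathrm{supp}\,\varphi$, and $\varphi$ has compact support. Second, Step 2 needs the weak form of $J_\tau z\ge c|A|^2z$ across $\{|A|=0\}$, where $z$ may reach zero vertically. That is exactly what Proposition \ref{prop_weakly} provides through Proposition \ref{cor_magic_minimal}, and I take it as given.
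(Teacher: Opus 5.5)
Your proposal is correct and follows the paper's own proof. You apply \eqref{eq_id_basic_impo} with $z=z_{\delta,t}$ and $V=\frac{n-2}{n\tau}|A|^2$, using the weak inequality $J_\tau z_{\delta,t}\ge \frac{n-2}{n\tau}(t\delta-\tau-\delta\tau+1)|A|^2 z_{\delta,t}$ from Proposition \ref{cor_magic_minimal}, then substitute $|\psi|^{1+\frac{n-2}{n}(1+\delta)}$ and absorb via Young's inequality as in the derivation of \eqref{EstBer}; your Step 1, deducing $-\Delta-V\ge 0$ in $\Omega$ from the positive supersolution $u$, simply makes explicit what the paper uses implicitly when it invokes \eqref{eq_id_basic_impo}.
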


\begin{proof}
	We apply \eqref{eq_id_basic_impo} with the choices $z=z_{\delta,t}$ and $V = \frac{n-2}{n\tau}|A|^2$ in Proposition \ref{cor_magic_minimal}. Inequality \eqref{beautiful_1} directly follows by the second in \eqref{eq_xidelta_A}. 
	Next, \eqref{beautiful_2} follows by considering the test function $\psi^{1+\frac{n-2}{n}(1+\delta)}$ in place of $\psi$ in \eqref{beautiful_1}, and applying H\"older's inequality as in the inequalities leading to \eqref{EstBer} above.
\end{proof}

\subsection{The gradient estimate}

Given a smooth open set $\UU \Subset M$, we say that a function $\ha \in C(M \backslash \UU)$ with $\ha>0$ on $\partial \UU$ is a \emph{minimal solution} to $\Delta \ha=0$ in $M \backslash \UU$ if there exists an exhaustion $\{M_j\}$ of $M$ by smooth, relatively compact domains satisfying 
\[
\UU \Subset M_j \Subset M_{j+1}, \qquad \bigcup_{j=1}^\infty M_j = M,
\]
and functions $\ha_j \in C^\infty(\overline{M}_j \backslash \UU)$ with the following properties:
	\[
	\left\{ \begin{array}{ll}
		\Delta \ha_j =0 & \quad \text{in } \, M_j \backslash \overline\UU, \\[0.2cm] 
		\ha_j >0 & \quad \text{on } \, \partial \UU, \\[0.2cm] 
		\ha_j = 0 & \quad \text{on } \, \partial M_j, \\[0.2cm]
		\ha_j \to \ha & \quad \text{locally uniformly in $M \backslash \UU$.}
	\end{array} \right.
	\]
Elliptic estimates (see e.g. the classical \cite{gt}) guarantee that $\ha \in C^\infty(M \backslash \UU)$ and solves $\Delta \ha = 0$ in $M \backslash \overline{\UU}$, and that $\ha_j \to \ha$ holds in $C^k_\loc(M \backslash \UU)$ for each $k$, in particular, up to $\partial \UU$. By the strong maximum principle we have $\ha_j>0$ in $M_j \backslash \UU$, and by comparison $\{\ha_j\}$ is pointwise increasing, whence $\ha>0$ in $M \backslash \UU$. The term ``minimal" comes from the following observation: if $v$ is any positive solution to $\Delta v \le 0$ in $M \backslash \overline{\UU}$ which is continuous and positive up to $\partial \UU$, then by comparing $v$ to $\ha_j$ and passing to limits one deduces that $\ha \le C v$ in $M \backslash \UU$, where $C = \max_{\partial \UU}(\ha/v)$. 

\begin{example}
	If we assume that $M$ is non-parabolic, by its very construction the minimal positive Green kernel of $M$ with a given pole $o$ is a minimal harmonic function in $M \backslash \UU$, for each smooth open set $\UU \Subset M$ containing $o$.  
\end{example}

Minimal harmonic functions enjoy the following integrability property: 

\begin{lemma}\label{lem_inte_h}	
	Let $\ha$ be a minimal harmonic function on $M \backslash \UU$. Then, for each $0 < \psi \in C(\R^+)$ satisfying 
	\[
	\int_{0}^1 \psi(s) \di s < \infty
	\]
	it holds	
	\[
	\int_{M \backslash \UU} \psi(\ha)|\nabla \ha|^2 < \infty. 
	\]
	In particular, 
	\[
	\int_{M \backslash \UU} \ha^{-a}|\nabla \ha|^2 < \infty \qquad \forall \, a \in (-\infty,1). 
	\]
\end{lemma}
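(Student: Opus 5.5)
We argue first on the approximating functions $\ha_j$ of the exhaustion $\{M_j\}$ and then let $j\to\infty$. Set $\Psi(s)\doteq\int_0^s \psi(\sigma)\,\di\sigma$. By assumption $\Psi$ is finite, nonnegative, nondecreasing and $C^1$ on $[0,\infty)$, with $\Psi(0)=0$ and $\Psi'=\psi$. The idea is to integrate $\diver(\Psi(\ha_j)\nabla \ha_j)=\psi(\ha_j)|\nabla \ha_j|^2$ over $M_j\backslash \UU$ and to use the divergence theorem. Because $\ha_j=0$ on $\partial M_j$ and $\Psi(0)=0$, the outer boundary term vanishes. This gives
\[
\int_{M_j\backslash \UU}\psi(\ha_j)|\nabla \ha_j|^2 = -\int_{\partial \UU}\Psi(\ha_j)\,\partial_\nu \ha_j ,
\]
where $\nu$ is the unit normal to $\partial\UU$ pointing into $M_j\backslash\UU$ (the sign is fixed accordingly).

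It remains to bound the right-hand side uniformly in $j$. Since $\ha_j\to\ha$ in $C^1$ up to $\partial\UU$, which is compact, we have $\sup_j\max_{\partial\UU}(\ha_j+|\nabla\ha_j|)<\infty$. Hence
\[
\Big|\int_{\partial \UU}\Psi(\ha_j)\,\partial_\nu \ha_j\Big|\le \Psi\Big(\sup_j\max_{\partial\UU}\ha_j\Big)\,\sup_j\max_{\partial\UU}|\nabla\ha_j|\,|\partial\UU|\le C
\]
uniformly in $j$. Here we use that $\Psi$ is nondecreasing and finite, which is exactly where $\int_0^1\psi<\infty$ enters; for $s\ge1$, finiteness of $\Psi(s)$ is automatic because $\psi$ is continuous on $\R^+$.

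Next we pass to the limit. Extend the integrands by zero outside $M_j$. On each compact $K\subset M\backslash\UU$ we have $\ha_j\to\ha$ in $C^1(K)$, and $\ha>0$ on $K$, so $\psi(\ha_j)|\nabla\ha_j|^2\to\psi(\ha)|\nabla\ha|^2$ uniformly on $K$ (here we use continuity of $\psi$ on $(0,\infty)$). Therefore
\[
\int_K\psi(\ha)|\nabla\ha|^2\le C .
\]
Exhausting $M\backslash\UU$ by compact sets $K$ yields the first claim.

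Near $\partial\UU$ one might worry about points where $\ha_j$ is small. This does not occur: $\ha>0$ on the compact set $\partial\UU$, the sequence $\ha_j$ is increasing, and $\ha_j>0$ in $M_j\backslash\UU$. Alternatively, Fatou's lemma applied on all of $M\backslash\UU$ gives the same conclusion directly.

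For the final statement, take $\psi(s)=s^{-a}$ with $a<1$. Then $\int_0^1 s^{-a}\,\di s<\infty$ and $\psi$ is positive and continuous on $\R^+$, so the first part applies.

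The only delicate point is the boundary term on $\partial M_j$. The proof needs $\Psi(\ha_j)$ to vanish there, which is why we define $\Psi$ with base point $0$. If $\psi$ is not integrable at $0$ the argument fails, and the condition $\int_0^1\psi<\infty$ is precisely what makes this choice of base point possible.
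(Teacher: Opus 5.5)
Your strategy is the paper's: integrate $\Delta\ha_j=0$ against $\Psi(\ha_j)$ with $\Psi(s)=\int_0^s\psi$, use $\Psi(0)=0$ to kill the term on $\partial M_j$, bound the term on $\partial\UU$ uniformly in $j$ via $C^1$ convergence up to $\partial\UU$, and pass to the limit. Your local uniform convergence on compacta plays the role of the paper's Fatou step. There is, however, a false assertion at exactly the step the hypothesis $\int_0^1\psi<\infty$ is meant to handle.

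You claim that $\Psi$ is $C^1$ on $[0,\infty)$ with $\Psi'=\psi$. This fails whenever $\psi$ is unbounded near $0$, which is the case of interest. Take $\psi(s)=s^{-a}$ with $a\in(0,1)$; this is used in the proof of Theorem~\ref{teo_gradesti} with $a=2\theta/(n-2)$. Then $\Psi(s)=s^{1-a}/(1-a)$, which is not even Lipschitz at $0$. Since $\ha_j=0$ on $\partial M_j$ and, by the Hopf lemma, $|\nabla\ha_j|>0$ there, the field $\Psi(\ha_j)\nabla\ha_j$ is not $C^1$ up to $\partial M_j$. Its divergence $\psi(\ha_j)|\nabla\ha_j|^2$ is unbounded near $\partial M_j$. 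So the divergence theorem cannot be invoked as you state it. Your last paragraph names $\partial M_j$ as the delicate point, but the vanishing of $\Psi(\ha_j)$ there only gives the value the boundary term would have. It does not justify the integration by parts.

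Your identity is still true, and the paper's repair is this. Test with the Lipschitz function $[\Psi(\ha_j)-\Psi(\eps)]_+$. It vanishes near $\partial M_j$, and its gradient is bounded because $\psi$ is bounded on $[\eps,\max\ha_j]$. This gives
\[
\int_{(M_j\backslash\UU)\cap\{\ha_j>\eps\}}\psi(\ha_j)|\nabla\ha_j|^2=\int_{\partial\UU}[\Psi(\ha_j)-\Psi(\eps)]_+\,\partial_\nu\ha_j ,
\]
with $\nu$ the exterior normal of $M_j\backslash\UU$. Now let $\eps\to0$, using monotone convergence on the left and dominated convergence on the right; the boundary integrand is dominated by $\Psi(\ha_j)|\partial_\nu\ha_j|$.

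Equivalently, apply the divergence theorem on inner parallel domains of $M_j\backslash\UU$. Then use that $\Psi(\ha_j)\nabla\ha_j$ is continuous up to $\partial M_j$ and vanishes there, together with $\psi(\ha_j)|\nabla\ha_j|^2\ge0$. What is really needed is continuity of $\Psi$ at $0$ with $\Psi(0)=0$, not differentiability. With this fix, the rest of your argument goes through as written.
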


\begin{proof}
Consider the sequences $\{M_j\}$ and $\{\ha_j\}$ in the definition of minimality for $\ha$. Define 
\[
\Psi : [0,\infty) \to (0,\infty), \qquad \Psi(t) = \int_0^t \psi(s) \di s, 
\]
and for $\eps>0$ integrate $\Delta \ha_j = 0$ in $M_j\backslash \UU$ against the Lipschitz function $[\Psi(\ha_j) - \Psi(\eps)]_+$ to deduce
\[
0 = \int_{\partial(M_j \backslash \UU)} [\Psi(\ha_j) - \Psi(\eps)]_+ \partial_\nu \ha_j - \int_{(M_j \backslash \UU) \cap \{\Psi(h_j) > \eps\}} \psi(\ha_j)|\nabla \ha_j|^2, 
\]	
where $\nu$ is the exterior normal to $M_j \backslash \UU$. Since $\ha_j = 0$ on $\partial M_j$ and $\Psi(0)=0$ thanks to the integrability of $\psi(s)$ at $s=0$, we get
\[
\int_{(M_j \backslash \UU)\cap \{\Psi(h_j) > \eps\}} \psi(\ha_j)|\nabla \ha_j|^2 = \int_{\partial \UU} [\Psi(\ha_j) - \Psi(\eps)]_+\partial_\nu \ha_j.
\]

Letting $\eps \to 0$ and using monotone convergence on the left hand side and dominated convergence on the right hand side, we deduce 
\[
\int_{M_j \backslash \UU} \psi(\ha_j)|\nabla \ha_j|^2 = \int_{\partial \UU} \Psi(\ha_j) \partial_\nu \ha_j.
\]
Letting $j \to \infty$, using Fatou's theorem on the left-hand side and Lebesgue theorem on the right-hand side, we conclude
\[
\int_{M \backslash \UU} \psi(\ha)|\nabla \ha|^2 \le \int_{\partial \UU} \Psi(\ha) \partial_\nu \ha < \infty,
\]
as claimed.
\end{proof}

We are ready to prove our main gradient estimate for minimal harmonic functions, which implies item {\it (i)} in Theorem \ref{teo_gradesti_intro}.

\begin{theorem}\label{teo_gradesti}
	Let $(M^n,g)$ be a complete, non-compact Riemannian manifold of dimension $n \ge 3$ satisfying
	\[
	\Ric \ge - \frac{n-1}{n-2} \tau V g, \qquad -\Delta - V \ \ \text{ has finite index,} 
	\]
	for some $0 \le V \in C(M)$ and 
	\begin{equation}\label{eq_cases_tau}
		\tau \in \left[0, \frac{3}{4}\right) \ \ \text{ if } \, n=3, \qquad \tau \in [0,1) \ \ \text{ if } \, n \ge 4.
	\end{equation}
	Let $\UU \Subset M$ be a smooth open set and assume that there exists $u \in C^2(M \backslash \UU)$ solving
	\[
	u > 0 \quad \text{in } \, M \backslash \UU, \qquad \Delta u + V u \le 0 \qquad \text{in } \, M\backslash \overline\UU.
	\]
	Let $\ha$ be a positive minimal solution to $\Delta \ha = 0$ in $M \backslash \UU$.
	
	Then,
	\begin{equation}\label{eq_gradesti_t}
		|\nabla \ha|^{\frac{n-2}{n-1}} \le C_\tau u^{\tau} \ha^{1-\tau}  \qquad \text{in } \, M \backslash \UU,
	\end{equation}
	where 
	\[
	C_\tau = \max_{\partial \UU} \left\{ |\nabla \ha|^{\frac{n-2}{n-1}} u^{-\tau} \ha^{\tau-1}\right\}.
	\]
\end{theorem}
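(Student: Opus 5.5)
The statement is equivalent to a comparison between a subharmonic function and the minimal harmonic function $\ha$. Set $w \doteq |\nabla \ha|^{\frac{n-2}{n-1}}$. For $\tau=0$ it reads $w \le C_0\ha$ and can be treated by the same scheme, so let $\tau\in(0,1)$ and put $\delta \doteq \frac{\tau}{1-\tau}$. Raising \eqref{eq_gradesti_t} to the power $\frac{1}{1-\tau}=1+\delta$ shows it is equivalent to
\[
\xi_\delta \doteq w^{1+\delta}u^{-\delta} \le C_\tau^{1+\delta}\, \ha \qquad \text{in } M\backslash \UU,
\]
and the constant $C_\tau^{1+\delta}=\max_{\partial\UU}(\xi_\delta/\ha)$ is exactly the boundary ratio. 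With this $\delta$ the coefficient $\delta-\tau-\delta\tau$ in Proposition \ref{cor_magic_general} vanishes. Hence $\Delta\xi_\delta \ge \delta(1+\delta)\xi_\delta|\nabla\log(w/u)|^2 \ge 0$ holds in $\{\nabla\ha\ne0\}$, and $\Delta\xi_\delta\ge 0$ holds weakly in all of $M\backslash\overline{\UU}$ by the Appendix. So the whole proof reduces to a maximum principle at infinity for the subharmonic $\xi_\delta$ against the minimal harmonic function $\ha$, with boundary inequality $\xi_\delta\le C\ha$ on $\partial\UU$ for $C \doteq C_\tau^{1+\delta}$.

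Since no Cheng--Yau bound is available here (unlike in Section \ref{sec_bernstein}), I would run an energy argument rather than a pointwise one. The function $\phi\doteq\xi_\delta/\ha$ satisfies $\LL_\ha \phi = \ha^{-2}\diver(\ha^2\nabla\phi)\ge0$ (Doob transform, \eqref{eq_doob} with $V=W=0$). Testing against $\ha^2(\phi-C)_+\psi^2$ and using $(\phi-C)_+=0$ near $\partial\UU$ gives the Caccioppoli bound
\[
\int \ha^2|\nabla(\phi-C)_+|^2\psi^2 \le 4\int \ha^2(\phi-C)_+^2|\nabla\psi|^2 \le 4\int_{\{\xi_\delta > C\ha\}} \xi_\delta^2|\nabla\psi|^2 .
\]
I would then choose $\psi=\eta(\ha)$ with the logarithmic cutoff of Section \ref{sec_bernstein} in the variable $\ha$. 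If the right-hand side tends to zero, then $(\phi-C)_+$ is constant, hence zero, which finishes the proof. After the coarea formula, the right-hand side is $\int \eta'(\ha)^2\xi_\delta^2|\nabla\ha|^2$. Minimality gives $\ha\le c\,u$, so $u^{-2\delta}\le c\,\ha^{-2\delta}$. Lemma \ref{lem_inte_h} controls integrals of the form $\int\psi(\ha)|\nabla\ha|^2$ with $\psi$ integrable at $0$.

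The main obstacle is the leftover factor $|\nabla\ha|^{2(1+\delta)\frac{n-2}{n-1}}$ in $\xi_\delta^2$, for which there is no a priori pointwise bound. My first attempt would be to bootstrap integral bounds instead. I would apply the basic inequality \eqref{eq_id_basic_impo}, valid outside the compact set $K$ given by the finite-index assumption, to $z=\xi_{\delta'}^{\,\beta}$ or to suitable powers of $w$ weighted by $\ha$. Using the second inequality in \eqref{eq_xidelta} and discarding nothing, in particular keeping the term $\delta(1+\delta)|\nabla\log(w/u)|^2$, this should give weighted bounds of the form
\[
\int |\nabla\ha|^{2+\gamma}\ha^{-a}\eta'(\ha)^2 < \infty .
\]
Interpolating these with Lemma \ref{lem_inte_h} via H\"older should make $\int\xi_\delta^2|\nabla\psi|^2 \lesssim (\log R)^{-\epsilon}$. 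I expect the exponent bookkeeping in this step, namely balancing $\gamma$, $a$ and the Kato constant $\frac{n}{n-1}$ against $\delta=\tau/(1-\tau)$, to be exactly where the restrictions $\tau<3/4$ for $n=3$ and $\tau<1$ for $n\ge4$ arise.

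An alternative worth trying first is a truncation argument. Compare $\xi_\delta$ with $C\ha_j+\epsilon$ on $M_j\backslash\UU$, which would require showing that $\xi_\delta\to 0$ along $\partial M_j$ in an averaged sense. That in turn would follow from the same integral bounds via a mean-value inequality for the subharmonic $\xi_\delta$.
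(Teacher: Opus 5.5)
There is a genuine gap, and it is exactly the step you call the main obstacle. With $\delta=\tau/(1-\tau)$ fixed to cancel the potential, your Caccioppoli argument needs $\int\xi_\delta^2|\nabla\psi|^2\to0$. But $\xi_\delta^2=|\nabla\ha|^{\frac{2(n-2)}{(n-1)(1-\tau)}}u^{-\frac{2\tau}{1-\tau}}$ carries a power of $|\nabla\ha|$ different from $2$ (it equals $2$ only for $\tau=\frac{1}{n-1}$), while Lemma \ref{lem_inte_h} only controls $\int\psi(\ha)|\nabla\ha|^2$. With $\psi=\eta(\ha)$ the right-hand side equals $\int_0^\infty\eta'(s)^2\big(\int_{\{\ha=s\}}\xi_\delta^2|\nabla\ha|\big)\di s$. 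Since only the flux $\int_{\{\ha=s\}}|\nabla\ha|$ is controlled, making this small requires $\xi_\delta\lesssim\ha$ on level sets, i.e.\ \eqref{eq_gradesti_t} itself. Your bootstrap does not break this circularity. Testing \eqref{eq_id_basic_impo} with $z=\xi_{\delta'}$ and using the second inequality in \eqref{eq_xidelta} bounds $\int V\xi_{\delta'}^2\psi^2$ and the gradient term by $\int\xi_{\delta'}^2|\nabla\psi|^2$, a quantity of the same type, and you exhibit no choice of exponents that closes the loop. Your truncation alternative needs the same missing bound. Moreover, $\eta(\ha)$ is compactly supported only if $\ha\to0$ at infinity, which is not assumed and fails, e.g., along a parabolic end. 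In the paper this hypothesis enters only the simplified argument under \eqref{eq_extracond}, together with Cheng--Yau. Replacing $\eta(\ha)$ by distance cutoffs would instead require $\xi_\delta\in L^2$, which is equally unavailable.

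The missing idea is to decouple the two jobs you give to $\delta$. The paper takes $\delta=\frac1{n-2}$ in Lemma \ref{prop_magic}$(i)$, so that $w^{1+\delta}=|\nabla\ha|$ exactly. It then replaces $u$ by the interpolation $u_\theta=(\ha+\eps)^{1-\theta}u^\theta$ from Lemma \ref{prop_magic}$(ii)$, so that $\xi=|\nabla\ha|\,u_\theta^{-1/(n-2)}$ satisfies $\Delta\xi\ge\frac{\theta-(n-1)\tau}{n-2}V\xi$. The residual potential is absorbed by a Doob transform with a second interpolation $u_t=(\ha+\eps)^{1-t}u^t$: if $\theta+(n-2)t=(n-1)\tau$, then $z=\xi/u_t$ is $\LL_{u_t}$-subharmonic. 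Using $\ha\le C_\UU u$ and $\ha+\eps\ge\eps$, one gets $\int z^2u_t^2=\int|\nabla\ha|^2u_\theta^{-2/(n-2)}\le C_\eps\int_{M\backslash\UU}|\nabla\ha|^2\ha^{-2\theta/(n-2)}$, which is finite by Lemma \ref{lem_inte_h} once $2\theta<n-2$. Yau's $L^2$ Liouville theorem for $\LL_{u_t}$ then gives $z\le a+\eps$; it uses only distance cutoffs and completeness, with no decay of $\ha$. Letting $\eps\to0$ yields \eqref{eq_gradesti_t} with the sharp constant. The restriction on $\tau$ is exactly the condition that such $\theta,t\in[0,1]$ exist: $t=\theta=\tau$, or $t=1$ and $\theta=(n-1)\tau-(n-2)$. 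It does not come from bookkeeping against the Kato constant, as you guessed.
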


\begin{proof}
	If $\ha$ is constant then the conclusion is obvious, so let us assume $\{|\nabla \ha|>0\} \neq \emptyset$. Define for convenience 
	\[
	w \doteq |\nabla \ha|^{\frac{n-2}{n-1}}. 
	\]
	Then, for each $\eps \in \R^+$ we have
	\begin{equation}\label{eq_niceeq}
		\Delta w + \tau V w \ge 0, \qquad \Delta u + Vu \le 0, \qquad \Delta (\ha+\eps) = 0
	\end{equation}
	pointwise in $\{|\nabla \ha| > 0\}$, where the first follows by combining Bochner inequality, the refined Kato inequality and our assumptions on $\Ric$.  
	
	We first provide a simpler argument, the one appearing in Section \ref{sec_bernstein}, to prove the theorem under the further conditions
	\begin{equation}\label{eq_extracond}
		V \in L^\infty(M), \qquad \tau < \frac{n-2}{n-1}, \qquad \ha(x) \to 0 \ \ \text{ as } \, x \to \infty.
	\end{equation}
	Proposition \ref{cor_magic_general} with the choice $\delta = \tau/(1-\tau)$ guarantees that the function
	\[
	\xi_1 \doteq w^{\frac{1}{1-\tau}} u^{-\frac{\tau}{1-\tau}},
	\]
	defined in $M \backslash \UU$, satisfies
	\begin{equation}\label{eq_magical_grad}
		\Delta \xi_1 \ge \disp  \frac{\tau}{(1-\tau)^2} \xi_1 \left| \nabla \log \left( \frac{|\nabla \ha|^{\frac{n-2}{n-1}}}{u}\right)\right|^2 \ge 0 \quad \text{pointwise in $\{|\nabla \ha|>0\}$}\, .
	\end{equation}
	Hence, $\Delta \xi_1 \ge 0$ holds weakly in the entire $M \backslash \overline{\UU}$ thanks to Proposition \ref{prop_weakly} in the Appendix.
	
	By the definition of $C_\tau$, we have
	\[
	\xi_1 \le C_\tau^{\frac{1}{1-\tau}} \ha \doteq C_\tau' \ha \qquad \text{on } \, \partial \UU.
	\]
	Furthermore, being $V \ge 0$ the function $u$ is superharmonic, whence 
	\begin{equation}\label{eq_minim}
	\ha \le C_\UU u \qquad \text{in } \, M \backslash \UU
	\end{equation}
	by the minimality of $\ha$, where we set $C_\UU = \max_{\partial \UU} \ha/u$. Since $V$ is bounded then $\Ric$ is bounded below by some constant. Hence, having fixed $\UU' \Subset M$ such that $\UU \Subset \UU'$, by Cheng-Yau's estimate (see \cite{ChengYau}) the bound $|\nabla \ha| \le C_1 \ha$ holds in $M \backslash \UU'$ for some constant $C_1 = C_1(\UU', \|V\|_\infty)>0$. Using this inequality first and then \eqref{eq_minim} we deduce 
	\[
	\xi_1(x) \le C_2 \ha(x)^{\frac{(n-2)}{(n-1)(1-\tau)}} u(x)^{-\frac{\tau}{1-\tau}} \le C_3 \ha(x)^{ \frac{1}{1-\tau} \left[\frac{n-2}{n-1}- \tau \right]} \to 0 \qquad \text{as } \, x \to \infty,
	\]
	where we used the last two conditions in \eqref{eq_extracond}. Finally, the function $\xi_1 - C_\tau'\ha$ is therefore subharmonic, non-positive on $\partial \UU$ and vanishes as $x \to \infty$, whence by comparison $\xi_1 - C_\tau'\ha \le 0$ in $M \backslash \UU$. Rearranging, we get the desired bound 
	\eqref{eq_gradesti_t}.
	
	We next prove the theorem in the stated generality. Let $\eps > 0$. From \eqref{eq_niceeq} and $(ii)$ in Lemma \ref{prop_magic}, for each $\theta,t \in [0,1]$ the functions
	\[
	u_\theta \doteq  (\ha+\eps)^{1-\theta}u^\theta, \qquad u_t \doteq  (\ha + \eps)^{1-t}u^t
	\]
	solve respectively
	\[
	\Delta u_\theta + \theta V u_\theta \le 0, \qquad \Delta u_t + tV u_t \le 0 \qquad \text{in } \, M \backslash \UU.
	\]
	Hence, by $(i)$ in Lemma \ref{prop_magic} applied with the choice $\delta = \frac{1}{n-2}$, the function
	\[
	\xi = w^{\frac{n-1}{n-2}}u_\theta^{- \frac{1}{n-2}} \qquad \text{solves} \qquad \Delta \xi \ge \left[\frac{\theta - (n-1)\tau}{n-2}\right] V \xi
	\]
	in the set 
	\[
	\Omega \doteq \{ |\nabla \ha|>0 \} \cap (M \backslash \overline{\UU}).
	\]
	Moreover, by \eqref{eq_doob} $z \doteq \xi/u_t$ solves
	\[
	\LL_{u_t} z \ge \left[\frac{\theta - (n-1)\tau + (n-2)t}{n-2}\right] V z \qquad \text{in } \, \Omega.
	\]
	where 
	\[
	\LL_{u_t} \phi = u_t^{-2} \diver \left( u_t^2 \nabla \phi \right).
	\]
	
Since $u_\theta\ge \ha^{1-\theta}u^{\theta}$ and $u_t\ge \ha^{1-t}u^{t}$, we have that 
$$
\max_{\partial \UU} z  \le a \doteq \max_{\partial U} \left( w^{\frac{n-1}{n-2}} \left[ \ha^{1-\theta}u^\theta \right]^{-\frac{1}{n-2}} \ha^{t-1}u^{-t}\right). 
$$
We note that the value of $a$ is independent of $\eps$ (indeed, it depends only on $n,h,u,\theta$ and $t$).

	We will choose $t,\theta\in[0,1]$ later to satisfy
	\begin{equation}\label{eq_nice_thetat}
		\theta -(n-1)\tau + (n-2)t = 0.
	\end{equation}
	Then, recalling the definition of $w$, 
	\[
	a = \max_{\partial U} \left( |\nabla \ha| \ha^{-\frac{n-1}{n-2}(1-\tau)} u^{- \frac{n-1}{n-2}\tau}\right).
	\]
	Moreover, the non-negative function $(z - a -\eps)_+$, considered as a function defined in $\Omega$, has compact support in $\Omega$ and satisfies
	\[
	\LL_{u_t}(z - a -\eps)_+ \ge 0 \qquad \text{weakly in } \, \Omega. 
	\]
	Since $(z - a -\eps)_+$ vanishes in a neighbourhood in $\Omega$ of $\partial \Omega$, extending $(z - a -\eps)_+$ with zero in $M \backslash \Omega$ yields a weak solution of the same inequality in the entire $M$. Moreover, 
	\[
	\int_M (z - a-\eps)_+^2 u_t^2 \le \int_{\Omega} z^2 u_t^2 = \int_{\Omega} w^{2 \frac{n-1}{n-2}} u_\theta^{- \frac{2}{n-2}} = \int_{\Omega} |\nabla \ha|^2 u_\theta^{- \frac{2}{n-2}}\, . 
	\]
	Using $\ha \le C_\UU u$ and $\eps \le \ha + \eps \le C$ in $M\backslash\UU$, we estimate
	\[
	u_\theta^{- \frac{2}{n-2}} = u^{-\frac{2\theta}{n-2}}(\ha+\eps)^{- \frac{2(1-\theta)}{n-2}} \le C_\eps'u^{-\frac{2\theta}{n-2}} \le C_\eps'' \ha^{-\frac{2\theta}{n-2}} \qquad \text{ in } \, M\backslash\overline{\UU}\supset\Omega. 
	\]
	Hence, we conclude
	\[
	\int_M (z - a-\eps)_+^2 u_t^2 \le C_\eps''\int_{M\backslash\UU}|\nabla \ha|^2 \ha^{- \frac{2\theta}{n-2}}.
	\]
	
	We therefore examine two cases:
	\begin{itemize}
		\item[(i)] If $\tau \in \left[ 0, \frac{n-2}{2}\right) \cap [0,1)$, then we set $t = \theta = \tau\in[0,1)$. Then, \eqref{eq_nice_thetat} is satisfied and $\frac{2\theta}{n-2} < 1$, whence by Lemma \ref{lem_inte_h}
		\begin{equation}\label{eq_inte_gradh}
			\int_{M\backslash\UU} |\nabla \ha|^2 \ha^{- \frac{2\theta}{n-2}} < \infty.
		\end{equation}
		\item[(ii)] If $\tau \in \left[ \frac{n-2}{n-1}, \frac{3}{2} \frac{n-2}{n-1} \right) \cap [0,1)$, then we set $t = 1$, $\theta = (n-1)\tau - (n-2)\in[0,1)$. Again, \eqref{eq_nice_thetat} is satisfied and 
		\[
		\frac{2\theta}{n-2} = 2\tau \frac{n-1}{n-2} - 2 < 1, 
		\]
		thus \eqref{eq_inte_gradh} holds as well.
	\end{itemize}	
	Note that the intervals in $(i),(ii)$ for $\tau$ exhaust all the ranges in \eqref{eq_cases_tau}. 
	
	Finally, from either $(i)$ or $(ii)$ we deduce that
	\[
	\int_M (z - a-\eps)_+^2 u_t^2 \le C_\eps''\int_{M\backslash\UU} |\nabla \ha|^2 \ha^{- \frac{2\theta}{n-2}} < \infty.
	\]
	We can therefore apply Yau's theorem  in \cite{yau} for non-negative subharmonic functions on complete, non-compact manifolds (its extension to weakly $\LL_{u_t}$-subharmonic ones can be found in \cite[Theorem 4.1]{prs_book}) to deduce that $(z - a -\eps)_+ \equiv 0$, so
	\[
	z \le a + \eps \qquad \text{in } \, M \backslash \UU, 
	\]
	namely, 
	\[
	|\nabla \ha| \le (a + \eps) u_t u_\theta^{\frac{1}{n-2}}.
	\]
	Recalling the definitions of $u_t$ and $u_\theta$, 
	\[
	|\nabla \ha| \le (a + \eps) u^{\frac{\theta}{n-2} + t} (\ha + \eps)^{\frac{1-\theta}{n-2} + 1-t}
	\]
	and using that $\theta + (n-2)t = (n-1)\tau$ we get
	\[
	|\nabla \ha| \le (a + \eps) u^{\frac{n-1}{n-2}\tau} (\ha + \eps)^{\frac{n-1}{n-2}(1-\tau)}.
	\]
	Letting $\eps \to 0$ and recalling the value of $a$ (which is independent of $\eps$) we get the desired estimate for $|\nabla \ha|$. 
\end{proof}


\begin{remark}
	It seems reasonable to presume that \eqref{eq_gradesti_t} holds dimensionless in the whole interval $\tau \in [0,1]$. Note that, in the limit $\tau = 1$ the estimate reads as 
	\[
	|\nabla \ha|^{\frac{n-2}{n-1}} \le C u^{\tau} \ha^{1-\tau} = C u.
	\]
\end{remark}

Next, we examine the case where $\ha$ is the minimal positive Green kernel $\gr$ of $-\Delta$ with pole at a given origin $o\in M$, and prove item {\it (ii)} in Theorem \ref{teo_gradesti_intro}, which we restate for the reader's convenience. As stated in the Introduction, the result generalizes Colding's \cite[Thm. 3.1]{colding} to manifold with possibly negative Ricci lower bounds. To estimate $|\nabla \gr|$ in  $M \backslash \{o\}$, we shall use as a comparison function the minimal positive Green kernel of $-\Delta - V$, whose existence is equivalent to require that $-\Delta - V$ is subcritical. Since $V \ge 0$, by \cite[Theorem 2.5]{murata} the subcriticality of $-\Delta - V$ implies that of $-\Delta$, whence the existence of $\gr$.

\begin{theorem}[\textbf{Gradient estimate for the Green kernel}]\label{teo_gradesti_kernel}
	Let $(M^n,g)$ be a complete Riemannian manifold of dimension $n \ge 3$ satisfying
	\[
	\Ric \ge - \frac{n-1}{n-2} \tau V g, \qquad -\Delta - V \ \ \text{ is subcritical,} 
	\]
	for some $0 \le V \in C^{0,\alpha}_\loc(M)$ and 
	\[
	\tau \in \left[0, \frac{3}{4}\right) \ \ \text{ if } \, n=3, \qquad \tau \in [0,1) \ \ \text{ if } \, n \ge 4.
	\]	
	Let $\gr$ be the minimal positive Green kernel of $-\Delta$ on $M$ with pole at $o$, and let $u$ be a positive kernel of $-\Delta - V$ with the same pole. 
	
	Then,                                                                                    
	\begin{equation}\label{eq_gradesti_kernel}
		|\nabla \gr|^{\frac{n-2}{n-1}} \le (n-2) |\mathbb{S}^{n-1}|^{\frac{1}{n-1}} u^{\tau} \gr^{1-\tau} \qquad \text{in } \, M \backslash \{o\}.
	\end{equation}
Moreover, equality holds for some $x_0 \in M \backslash \{o\}$ if and only if $V \equiv 0$ and $M$ is isometric to~$\R^n$. 
\end{theorem}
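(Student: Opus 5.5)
The plan is to apply Theorem \ref{teo_gradesti} on $M\backslash \UU_r$, with $\UU_r = B_r(o)$ a small geodesic ball, $\ha=\gr$ and $u$ the Green kernel of $-\Delta-V$, and then let $r\to 0$ using the local asymptotics of both kernels at the pole.

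\textbf{Step 1: setting up Theorem \ref{teo_gradesti}.} Subcriticality gives $-\Delta - V\ge 0$ on $M$, so the index is zero and in particular finite. Moreover $u>0$ solves $\Delta u + Vu = 0$ in $M\backslash\{o\}$. By the Example following the definition of minimal solutions, $\gr$ is a minimal harmonic function on $M\backslash \UU_r$. Theorem \ref{teo_gradesti} then yields
\[
|\nabla \gr|^{\frac{n-2}{n-1}} \le C_\tau(r)\, u^\tau \gr^{1-\tau} \quad \text{in } M\backslash \UU_r,
\]
with
\[
C_\tau(r) = \max_{\partial B_r}|\nabla \gr|^{\frac{n-2}{n-1}}u^{-\tau}\gr^{\tau-1}.
\]

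\textbf{Step 2: letting $r \to 0$.} Since $V\in C^{0,\alpha}_{\loc}$, both kernels have the Euclidean singularity at the pole:
\[
\gr,\ u = \frac{r^{2-n}}{(n-2)|\mathbb{S}^{n-1}|}\,(1+o(1)), \qquad |\nabla \gr| = \frac{r^{1-n}}{|\mathbb{S}^{n-1}|}\,(1+o(1)).
\]
This follows from the standard parametrix construction, and the Hölder regularity of $V$ is exactly what keeps the correction term of lower order. Inserting these expansions, $u^{-\tau}\gr^{\tau-1}\sim \gr^{-1}$ and
\[
C_\tau(r)\to |\mathbb{S}^{n-1}|^{-\frac{n-2}{n-1}}(n-2)|\mathbb{S}^{n-1}| = (n-2)|\mathbb{S}^{n-1}|^{\frac{1}{n-1}}.
\]
Letting $r\to 0$ then gives \eqref{eq_gradesti_kernel}. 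I expect the delicate point to be justifying the gradient asymptotics uniformly on $\partial B_r$ (including the $C^1$ error for $\gr$).

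\textbf{Step 3: the equality case.} This is where I expect the main obstacle. Suppose equality holds at some $x_0$. I would revisit the proof of Theorem \ref{teo_gradesti} through the function $z=\xi/u_t$, which satisfies $\LL_{u_t} z\ge 0$ and has $\sup z$ equal to its boundary limit as $\eps\to0$, $r\to0$. Equality at an interior point should force, via the strong maximum principle, $z$ to be constant on the component of $\{|\nabla\gr|>0\}$ containing $x_0$. Then all the discarded terms must vanish:
\begin{itemize}
\item the gradient term in Lemma \ref{prop_magic}, which forces $|\nabla\gr|^{\frac{n-2}{n-1}}$ to be proportional to $u_\theta$;
\item the term $t(1-t)|\nabla\log(u/(\gr+\eps))|^2$, which forces $u$ to be proportional to $\gr$, and hence, comparing $\Delta u+Vu=0$ with $\Delta\gr=0$, gives $V\equiv0$;
\item equality in the refined Kato inequality and in the Ricci bound, which makes $\nabla^2 \gr$ have the rigid ``radial'' structure.
\end{itemize}

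To handle the passage to the limits $\eps\to 0$ cleanly, I would try to run the maximum principle directly with $\eps=0$ on $M\backslash\{o\}$. With $V\equiv 0$ we have $u=\gr$, and then $\xi=|\nabla \gr|^{\frac{n-2}{n-1}}\gr^{-1}$ attains an interior maximum. This reduces the rigidity to Colding's case: equality in Bochner's formula together with the refined Kato inequality yields $\gr^{2/(2-n)}$ having Hessian proportional to $g$ and $\Ric\equiv 0$ along $\nabla\gr$, and a standard cone-rigidity argument gives $M\cong\R^n$. The converse direction is immediate from the explicit Euclidean Green kernel.
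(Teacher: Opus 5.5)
Your Steps 1 and 2 match the paper's argument: Theorem \ref{teo_gradesti} on $M\backslash B_r(o)$, then $r\to 0$ via the pole asymptotics of $\gr$, $u$ and $|\nabla\gr|$. The rigidity part takes a different route from the paper, and one step of it fails as written.

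You extract $u\propto\gr$ from the discarded term $t(1-t)|\nabla\log(u/\gr)|^2$. But for $n=3$, $\tau\in[\frac12,\frac34)$, the only admissible choice in the proof of Theorem \ref{teo_gradesti} is case (ii). There $t=1$, so this term is identically zero. At $\tau=\frac12$ one also has $\theta=0$, so no discarded term contains $\nabla\log(u/\gr)$ at all.

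The fix is easy. With $\eps=0$ you only need a local strong maximum principle at $x_0$. The integrability constraint that dictated $(t,\theta)$ is then gone, and you may take $t=\theta=\tau$ for every $\tau$. Setting $v\doteq\gr^{1-\tau}u^{\tau}$, this gives $z=(w/v)^{\frac{n-1}{n-2}}$ with $\LL_v z\ge\tau(1-\tau)|\nabla\log(u/\gr)|^2 z\ge 0$ in $\{|\nabla\gr|>0\}$. Equality at $x_0$ makes $z$ constant on the component of $\{|\nabla\gr|>0\}$ containing $x_0$. Since $z$ is continuous and vanishes where $\nabla\gr=0$, it is constant on all of $M\backslash\{o\}$. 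Hence $u=c\gr$, so $V\equiv0$, and $c=1$ by the asymptotics at $o$. Alternatively, in case (ii) combine your first bullet $w\propto u_\theta$ with $z\equiv\mathrm{const}$. This gives $u_t\propto u_\theta$, i.e. $(u/\gr)^{t-\theta}$ is constant with $t\neq\theta$.

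The paper's rigidity argument is lighter. Taking $\delta=\tau/(1-\tau)$ in Lemma \ref{prop_magic}(i) cancels the potential exactly. So $\xi_1=w^{\frac{1}{1-\tau}}u^{-\frac{\tau}{1-\tau}}$ satisfies $\Delta\xi_1\ge\frac{\tau}{(1-\tau)^2}\xi_1|\nabla\log(w/u)|^2\ge0$ for the \emph{plain} Laplacian. Equality at $x_0$ is then an interior zero maximum of the subharmonic function $\xi_1-C^{\frac{1}{1-\tau}}\gr$, where $C=(n-2)|\mathbb{S}^{n-1}|^{\frac{1}{n-1}}$. Hence $w/u$ is constant. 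Then $V\equiv0$ follows by inserting $w=Cu$ into $\Delta u+Vu=0$ and using $\Delta w\ge-\tau Vw$ with $\tau<1$.

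This needs no $\eps$, no $u_t,u_\theta$, no weighted operator and no case split. The restriction $\tau<\frac{n-2}{n-1}$ attached to $\xi_1$ in Theorem \ref{teo_gradesti} only concerned decay at infinity, which is irrelevant here. Your corrected version is the same mechanism, phrased as a Doob transform of $w$ by the interpolated supersolution $\gr^{1-\tau}u^\tau$. Its mild advantage is that the ratio being maximized is literally the quantity in \eqref{eq_gradesti_kernel}. The final reductions to Colding's rigidity are equivalent: yours via the Hessian of $\gr^{2/(2-n)}$, the paper's via $|\nabla b|=1$, $\Delta b=(n-1)/b$ and equality in Laplacian comparison.

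Finally, both arguments need $\tau>0$ to conclude $V\equiv0$: your factor $\tau(1-\tau)$ and the paper's $\tau/(1-\tau)^2$ both vanish at $\tau=0$. This cannot be repaired, because the claim is false there. For $\tau=0$ the estimate does not involve $u$. On $\R^n$, take any small $0\le V\in C^\infty_c(\R^n)$ with $V\not\equiv0$; then $-\Delta-V$ is subcritical by Hardy's inequality, and equality holds everywhere. So at $\tau=0$ only $M\cong\R^n$ can be deduced.
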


\begin{proof}
	Observe first that $M$ is non-compact because $-\Delta - V$, hence $-\Delta$, is subcritical. We can therefore apply Theorem \ref{teo_gradesti} to $\gr$ on $M \backslash B_\eps(o)$ for each fixed $\eps>0$ to get
	\[
|\nabla \gr|^{\frac{n-2}{n-1}} \le c_{\eps,\tau} u^{\tau} \gr^{1-\tau} \qquad \text{in } \, M \backslash B_\eps(o), \qquad c_{\eps,\tau} = \max_{\partial B_\eps(o)} \left\{ |\nabla \gr|^{\frac{n-2}{n-1}} u^{-\tau} \gr^{\tau-1}\right\}.
	\]
	By the asymptotic behaviour of singular solutions (see \cite[\S8,18,19]{miranda}, or \cite[Theorem 2.1]{bmrsx})
	we have
	\[
\gr(x) = u(x) \bigl(1+o(1)\bigr) = \frac{\di(x,o)^{2-n}}{(n-2)|\mathbb{S}^{n-1}|} \bigl(1+o(1)\bigr) \quad \text{ as } x \to o\, , 
\]
and 
\[
|\nabla \gr| = \frac{\di(x,o)^{1-n}}{|\mathbb{S}^{n-1}|} \bigl(1+o(1)\bigr)  \quad \text{ as } x \to o\, ;
\] 
whence $c_{\eps, \tau} \to (n-2) |\mathbb{S}^{n-1}|^{\frac{1}{n-1}}$ as $\eps \to 0$, concluding \eqref{eq_gradesti_kernel}. 
	
	Assume that equality in \eqref{eq_gradesti_t} holds at some point $x_0 \in M \backslash \{o\}$. In the proof of Theorem \ref{teo_gradesti} we observed, in \eqref{eq_magical_grad} and right after it, that the function
	\[
	\xi_1 \doteq |\nabla \gr|^{\frac{n-2}{(n-1)(1-\tau)}} u^{-\frac{\tau}{1-\tau}}
	\]
	satisfies, pointwise in $\{|\nabla\gr|>0\}=\{\xi_1 > 0\}$,
	\begin{equation}\label{xi1_point}
		\Delta \xi_1 \ge \disp  \frac{\tau}{(1-\tau)^2} \xi_1 \left| \nabla \log \left( \frac{|\nabla \gr|^{\frac{n-2}{n-1}}}{u}\right)\right|^2 \ge 0.
	\end{equation}
Hence, equality in \eqref{eq_gradesti_kernel} means that the subharmonic function 
	\[
	\xi_1 - \left[(n-2) |\mathbb{S}^{n-1}|^{\frac{1}{n-1}}\right]^{\frac{1}{1-\tau}} \gr
	\]
	attains a maximum at $x_0 \in \{|\nabla \gr|>0\}$ with value $0$. By the maximum principle it is therefore identically zero, so $\xi_1$ is harmonic. From this and \eqref{xi1_point} we deduce that
	\begin{equation}\label{eq_nablaG_eq}
	|\nabla \gr|^{\frac{n-2}{n-1}}/u \qquad \text{is a constant $C$ in $\{|\nabla \gr|>0\}$}.
	\end{equation}
	
	Consequently, $|\nabla \gr|> 0$ in $M \backslash \{o\}$, and in view of the asymptotic behaviour of $|\nabla \gr|$ and $u$ as $x \to o$ we have  
	\[
	C = (n-2) |\mathbb{S}^{n-1}|^\frac{1}{n-1}.
	\]
	We compute in $M \backslash \{o\}$:
	\begin{equation}\label{eq_bella!}
	0 = \Delta u + Vu = C^{-1} \left( \Delta |\nabla \gr|^{\frac{n-2}{n-1}} + V|\nabla \gr|^{\frac{n-2}{n-1}} \right).
	\end{equation}
	On the other hand, by the Bochner formula and the refined Kato inequality,
	\[
	\Delta |\nabla \gr|^{\frac{n-2}{n-1}} \ge - \frac{n-2}{n-1}|\nabla \gr|^{\frac{n-2}{n-1} - 2}\Ric(\nabla \gr,\nabla \gr) \ge - \tau V |\nabla \gr|^{\frac{n-2}{n-1}}.
	\]
Once plugged into \eqref{eq_bella!}, this yields 
	\[
	0 \ge (1-\tau)V |\nabla \gr|^{\frac{n-2}{n-1}}\ge 0.
	\]
	Since $\tau < 1$ we therefore conclude that $V \equiv 0$, so $\Ric \ge 0$ and $u = \gr$ by uniqueness of the minimal positive Green kernel. 
	
	Finally, the identity \eqref{eq_nablaG_eq} becomes
	\begin{equation}\label{=_grad}
	|\nabla \gr|^{\frac{n-2}{n-1}} = (n-2) |\mathbb{S}^{n-1}|^{\frac{1}{n-1}} \gr \qquad \text{in } \, M \backslash \{o\},
	\end{equation}
and thus, following \cite{colding}, simple computations using only \eqref{=_grad} and $\Delta\gr=0$ show that the function
	\[
	b = \big[ (n-2)|\mathbb{S}^{n-1}|\big]^{\frac{1}{2-n}} \gr^{\frac{1}{2-n}}
	\]
	solves
	\[
	|\nabla b| = 1, \qquad \Delta b = \frac{n-1}{b}\,. 
	\]
	In particular, since $\lim_{x \to o}b(x) = 0$ while $b>0$ on $\R^+$, $b$ is the distance function $r$ from $o$. From $b \in C^\infty(M \backslash \{o\})$ we deduce that $\cut(o) = \emptyset$, and the identity $\Delta r = (n-1)/r$ combined with $\Ric \ge 0$ imply rigidity in the Laplacian comparison theorem, whence $M = \R^n$.
\end{proof}
\begin{remark}
	The rigidity part in the above theorem characterized Euclidean space. As a matter of fact, Colding's gradient estimate was later extended in \cite{mrs} to manifolds whose Ricci curvature satisfies
	\[
	\Ric \ge -(n-1){\rm H}(r), \qquad r(x) = {\rm dist}(x,o)
	\]
	for some $0 \le {\rm H} \in ([0,\infty))$ non-increasing, with rigidity characterizing the rotationally symmetric model space $M_{\rm H}$ of radial sectional curvature ${\rm H}$. It may be interesting to see whether the results in \cite{mrs} allow for extensions to spectral Ricci lower bounds.
\end{remark}
We conclude the section by establishing the following corollary for complete, $2$-sided minimal hypersurfaces $(M^n,g) \to N^{n+1}$ into an ambient manifold with non-negative sectional curvature. In this case, Gauss equation and the refined Kato inequality still imply that 
\begin{equation}\label{ineq_Ricci_min}
\Ric \ge - \frac{n-1}{n}|A|^2g,
\end{equation}
see \cite{cmmr_criticality}, and stability corresponds to the non-negativity of the Jacobi operator 
\[
-\Delta - \big[|A|^2 + \overline{\Ric}(\nu,\nu)\big].
\]
As explained at the end of Section \ref{sec_bernstein}, if $M$ is parabolic, from the existence of a positive solution to 
\[
\Delta u + \big[|A|^2 + \overline{\Ric}(\nu,\nu)\big] u \le 0 \qquad \text{in } \, M
\]
one readily gets $|A| \equiv 0$ and $\overline{\Ric}(\nu,\nu) \equiv 0$ on $M$. Therefore, we focus on non-parabolic manifolds.

\begin{corollary}\label{cor_gradesti_minimal}
	Let $M^n \to (N^{n+1},\bar g)$ be a $2$-sided, complete minimal hypersurface in an ambient manifold with $\overline{\Sec} \ge 0$. Assume that $M$ is non-parabolic and and let $\gr$ be the minimal positive Green kernel of $-\Delta$ centered at some origin $o \in M$.  
	\begin{itemize}
	\item[(i)] If $M$ has finite index, then for each smooth open set $\UU \Subset M$ for which there exists a solution $0 < u \in C^2(M \backslash \UU)$ to 
	\[
	\Delta u + \big[|A|^2 + \overline{\Ric}(\nu,\nu)\big] u \le 0 \qquad \text{in } \, M \backslash \overline{\UU},
	\]
	it holds
	\[
	|\nabla \gr|^{\frac{n-2}{n-1}} \le C u^{\frac{n-2}{n}} \gr^{\frac{2}{n}}, \qquad \text{where } \, C = \max_{\partial \UU} \left\{ |\nabla \gr|^{\frac{n-2}{n-1}} u^{-\frac{n-2}{n}} \gr^{- \frac{2}{n}} \right\}.
	\]
	\item[(ii)] If $M$ is stable and the Jacobi operator $ -\Delta - \big[|A|^2 + \overline{\Ric}(\nu,\nu)\big]$ is subcritical, letting $u$ be a positive Green kernel of the Jacobi operator with pole $o$, it holds
	\begin{equation}\label{eq_gradesti_kernelb}
		|\nabla \gr|^{\frac{n-2}{n-1}} \le (n-2) |\mathbb{S}^{n-1}|^{\frac{1}{n-1}} u^{\frac{n-2}{n}} \gr^{\frac{2}{n}} \qquad \text{in } \, M \backslash \{o\}.
	\end{equation}
	Moreover, equality holds for some $x_0 \in M \backslash \{o\}$ if and only if $M$ is totally geodesic and isometric to $\R^n$, and $\overline{\Ric}(\nu,\nu) \equiv 0$ along $M$.
	\end{itemize}
\end{corollary}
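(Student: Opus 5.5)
The plan is to reduce the corollary to Theorems \ref{teo_gradesti} and \ref{teo_gradesti_kernel} with the choices
\[
V \doteq |A|^2 + \overline{\Ric}(\nu,\nu), \qquad \tau \doteq \frac{n-2}{n}.
\]

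First we check the hypotheses. Since $\overline{\Sec} \ge 0$, we have $\overline{\Ric}(\nu,\nu) \ge 0$, so $V \ge 0$. Moreover $V$ is smooth, hence in $C^{0,\alpha}_\loc(M)$. From \eqref{ineq_Ricci_min},
\[
\Ric \ge -\frac{n-1}{n}|A|^2 g = -\frac{n-1}{n-2}\cdot\frac{n-2}{n}|A|^2 g \ge -\frac{n-1}{n-2}\,\tau V g ,
\]
where the last step uses $\overline{\Ric}(\nu,\nu)\ge 0$ and $\tau\ge 0$. The admissible range must be verified. For $n=3$ we get $\tau = 1/3 < 3/4$. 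For $n \ge 4$ we get $\tau = (n-2)/n < 1$. In both cases the exponents match: $u^{\tau}\gr^{1-\tau} = u^{\frac{n-2}{n}}\gr^{\frac{2}{n}}$.

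For (i), finite index of $M$ means precisely that $-\Delta - V$ has finite index. The Green kernel $\gr$ is a minimal positive harmonic function on $M\setminus \UU$ provided $o \in \UU$, by the Example preceding Lemma \ref{lem_inte_h}. If $o \notin \UU$, I would enlarge $\UU$ slightly, or note that Theorem \ref{teo_gradesti} only needs minimality, which can be arranged from the construction of $\gr$ by exhaustion. Theorem \ref{teo_gradesti} then gives the estimate with exactly the stated constant. For (ii), stability gives $-\Delta - V \ge 0$, and subcriticality is assumed, so Theorem \ref{teo_gradesti_kernel} yields \eqref{eq_gradesti_kernelb} directly.

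The only real step is the rigidity statement. If equality holds at some $x_0$, Theorem \ref{teo_gradesti_kernel} gives $V \equiv 0$ and $M$ isometric to $\R^n$. Since both summands of $V$ are nonnegative, $V\equiv 0$ forces $|A|\equiv 0$, so $M$ is totally geodesic, and $\overline{\Ric}(\nu,\nu)\equiv 0$. For the converse, suppose $M$ is totally geodesic, isometric to $\R^n$, and $\overline{\Ric}(\nu,\nu)\equiv 0$. Then $V\equiv 0$, so $u=\gr$ is the Euclidean Green kernel, by uniqueness of the minimal one (or by the same uniqueness used in the theorem's proof). The explicit formulas then give equality everywhere, since $|\nabla\gr|^{\frac{n-2}{n-1}} = (n-2)|\mathbb{S}^{n-1}|^{\frac{1}{n-1}}\gr$ on $\R^n$ and $u^\tau\gr^{1-\tau}=\gr$.

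I expect no serious obstacle. The main care points are the exponent bookkeeping and the observation that $\overline{\Sec}\ge0$ is used twice: once for \eqref{ineq_Ricci_min}, and once for $\overline{\Ric}(\nu,\nu)\ge 0$, which is needed both to absorb that term in the Ricci bound and to split $V\equiv0$ into its two parts.
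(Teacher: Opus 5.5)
Your proposal is correct and is the paper's own argument: the paper proves the corollary by applying Theorems \ref{teo_gradesti} and \ref{teo_gradesti_kernel} with $\tau=\frac{n-2}{n}$ and $V=|A|^2+\overline{\Ric}(\nu,\nu)$. Your extra bookkeeping (using $\overline{\Ric}(\nu,\nu)\ge 0$ both in the Ricci bound and to split $V\equiv 0$, plus the converse via $u=\gr$) fills in what the paper leaves implicit; note that, like the paper, the converse tacitly takes $u$ to be the \emph{minimal} kernel, since $u=\gr+1$ on $\R^n$ gives strict inequality.

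One correction: your fallback for $o\notin\UU$ does not work. In that case $\gr$ is not harmonic in $M\backslash\UU$, and for $o\notin\overline{\UU}$ the inequality in (i) genuinely fails near $o$, because the left side grows like $\di(x,o)^{2-n}$ while the right side grows only like $\di(x,o)^{-2(n-2)/n}$ (as $u$ is bounded there). So $o\in\UU$ must be read as an implicit hypothesis of (i); it cannot be arranged by enlarging $\UU$, which would in any case change $C$.
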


\begin{proof}
	In our assumptions inequality \eqref{ineq_Ricci_min} holds. It is therefore enough to apply Theorem \ref{teo_gradesti} in case \textit{(i)} and Theorem \ref{teo_gradesti_kernel} in case \textit{(ii)} with the choices 
	\[
	\tau = \frac{n-2}{n} \ \ \text{ (which satisfies \eqref{eq_cases_tau}),} \qquad V = |A|^2 + \overline{\Ric}(\nu,\nu) 
	\]
	to conclude.
\end{proof}

\section{Appendix}

In this short appendix we prove the following technical proposition,  which has been used on various occasions throughout the rest of the article. As pointed out in Remark \ref{rem_vertical}, the subtlety of the result is that the subsolution $f$ below is not assumed a priori to be in $H^1_\loc$ when extended by zero outside $\{f>0\}$. In fact, this is the main issue to be proved. 

\begin{proposition}\label{prop_weakly}
	Let $\Omega \subset M$ be an open subset of a Riemannian manifold, and assume that $f \in C(\Omega) \cap H^1_\loc(\{f>0\})$ solves
	\[
	\Delta f + P f \ge 0 \qquad \text{weakly in } \, \{f>0\},
	\]
	for some $P \in L^\infty_\loc(\Omega)$. 
	
	Then, the function $f_+$ belongs to $H^1_\loc(\Omega)$ and satisfies $\Delta f_+ + P f_+ \ge 0$ weakly in $\Omega$.
\end{proposition}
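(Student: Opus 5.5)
We use a truncation-and-test-function argument in the spirit of Kato's inequality. The key is to test the inequality on $\{f>0\}$ with functions supported where $f>\eps$, and derive a Caccioppoli-type bound on $\nabla f$ there that is uniform in $\eps$.

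Fix $\eps>0$ and a cutoff $\eta\in \lip_c(\Omega)$ with $0\le\eta\le1$. Since $f$ is continuous, $\{f>\eps\}\cap\operatorname{supp}\eta$ is a compact subset of the open set $\{f>0\}$. The function $(f-\eps)_+\eta^2$ therefore lies in $H^1$ with compact support in $\{f>0\}$, and it is a legitimate test function. Testing $\Delta f+Pf\ge0$ against it gives
\[
\int_{\{f>\eps\}} |\nabla f|^2\eta^2 \le \int P f (f-\eps)_+\eta^2 - 2\int (f-\eps)_+\eta\langle\nabla f,\nabla\eta\rangle .
\]
Young's inequality absorbs half of the first term. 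Using $(f-\eps)_+\le f$ and the local boundedness of $f$ and $P$ on $\operatorname{supp}\eta$, we get
\[
\int |\nabla (f-\eps)_+|^2\eta^2 \le C\Big(\|P\|_{L^\infty(\operatorname{supp}\eta)}\|f\|^2_{L^\infty(\operatorname{supp}\eta)}+\|f\|^2_{L^\infty(\operatorname{supp}\eta)}\|\nabla\eta\|_\infty^2\Big)\,|\operatorname{supp}\eta|,
\]
with $C$ independent of $\eps$. Hence $(f-\eps)_+$ is bounded in $H^1_\loc(\Omega)$. As $\eps\to0$ it converges uniformly on compacts to $f_+$, so weak compactness gives $f_+\in H^1_\loc(\Omega)$, with $\nabla f_+=\nabla f\,\mathbf 1_{\{f>0\}}$.

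For the weak inequality, take $0\le\varphi\in \lip_c(\Omega)$. We need $-\int\langle\nabla f_+,\nabla\varphi\rangle+\int Pf_+\varphi\ge0$. Let $\rho_\eps(s)=\min\{1,(s-\eps)_+/\eps\}$ and test the inequality on $\{f>0\}$ with $\varphi\,\rho_\eps(f)$. This is admissible: it is Lipschitz in $f$, lies in $H^1$, and has compact support in $\{f>0\}$ by continuity. Expanding,
\[
-\int\rho_\eps(f)\langle\nabla f,\nabla\varphi\rangle - \int \varphi\,\rho_\eps'(f)|\nabla f|^2 + \int P f\rho_\eps(f)\varphi \ge 0 .
\]
The middle term is $\le0$, so dropping it preserves the inequality in the right direction:
\[
-\int\rho_\eps(f)\langle\nabla f,\nabla\varphi\rangle + \int P f\rho_\eps(f)\varphi \ge 0 .
\]
Letting $\eps\to0$, we have $\rho_\eps(f)\to\mathbf 1_{\{f>0\}}$ pointwise. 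Dominated convergence, justified since $|\nabla f|\mathbf 1_{\{f>0\}}=|\nabla f_+|\in L^2_\loc$, yields the claim.

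The main obstacle is the first step, the uniform $H^1$ bound. It must come entirely from the equation and not from any a priori regularity of $f$ near $\partial\{f>0\}$: as Remark \ref{rem_vertical} stresses, $f$ may approach zero with infinite slope. The choice $(f-\eps)_+\eta^2$, supported away from the free boundary, is what makes this bound accessible. Note also that no sign condition on $P$ is needed, since only $|P|$ enters the estimates.
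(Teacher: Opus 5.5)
Your proof is correct and follows the same strategy as the paper. You truncate at level $\eps$, derive a Caccioppoli bound for $(f-\eps)_+$ that is uniform in $\eps$ (it is the same estimate the paper gets by testing against $\varphi^2 f_\eps$), and conclude $f_+\in H^1_\loc(\Omega)$ by weak compactness. The paper obtains the limiting inequality by invoking the Brezis--Kato inequality for $f_\eps$ on $\{f>0\}$, extending by zero and letting $\eps\to0$, whereas you reprove that Kato-type step directly with the test function $\varphi\,\rho_\eps(f)$ and the sign of the dropped term $-\int\varphi\,\rho_\eps'(f)|\nabla f|^2$, which makes your argument self-contained.
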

\begin{proof}[Proof of Proposition \ref{prop_weakly}] For $\eps>0$ define $f_\eps = \max\{ f-\eps,0\}$ in $\{f>0\}$. Recall that $f\in C(\Omega)$. Then, by the Brezis-Kato inequality (see \cite[Proposition 4.1]{pvv} for its proof in a manifold setting) $f_\eps \in H^1_\loc(\{f>0\})$ and the following holds in the distributional (and weak) sense in $\{f>0\}$:
 \[
 \Delta f_\eps \ge - (Pf_\eps + P\eps)\mathbb{1}_{\{f > \eps\}} \ge - (Pf_\eps + |P|\eps)\mathbb{1}_{\{f > \eps\}}. 
 \]
 Moreover, since $f_{\eps}$ vanishes in a neighbourhood of $\partial \{f>0\}$, its extension with zero (still called $f_\eps$) belongs to $H^1_\loc(\Omega)$ and satisfies
 \begin{equation}\label{eq_weakineq}
 \Delta f_\eps \ge - (Pf_\eps + |P|\eps)\mathbb{1}_{\{f > \eps\}}  
 \end{equation}
in the weak sense in the entire $\Omega$. 

We next integrate \eqref{eq_weakineq} against $\varphi^2 f_\eps$, with $\varphi \in C^1_c(\Omega)$, and apply the Cauchy-Schwarz and Young inequalities to the term with $\langle \nabla f_\eps, \nabla \varphi \rangle$ to deduce
\[
\frac{1}{2} \int \varphi^2 |\nabla f_\eps|^2 \le 2 \int f_\eps^2|\nabla \varphi|^2 + \int f_\eps(f_\eps + \eps)\varphi^2 |P|.
\]
Since $0\leq f_\eps\leq f\in C(\Omega)$, then $\{f_\eps\}$ satisfies uniform $H^1$ bounds in any given compact subset of $\Omega$. Moreover, from $f_\eps \to f$ in $L^2_\loc(\Omega)$, by \cite[Lemma 6.2 page 16]{friedman} we have $f \in H^1_\loc(\Omega)$ and $\nabla f_\eps \rightharpoonup \nabla f$ weakly in each compact subset. Letting $\eps \to 0$ in \eqref{eq_weakineq}, we deduce that $f$ satisfies the desired weak inequality.
\end{proof}

\section*{Acknowledgements}
This work was initiated while Alberto Roncoroni was visiting the Department of Mathematics of the Universitat Politècnica de Catalunya, whose hospitality is gratefully acknowledged. The authors would like to thank J.-M. Roquejoffre for useful discussions and for pointing out reference \cite{bere_hamel}.

\vspace{0.4cm}

\noindent \textbf{Conflict of Interest.} The authors have no conflict of interest.



\

\Addresses

\end{document}